\newtheorem{theorem}{Theorem}[section]
\newtheorem{lemma}[theorem]{Lemma}
\newtheorem{remark}[theorem]{Remark}
\title{Optimal routing in two-queue polling systems}
\author{
I.J.B.F. Adan\thanks{Department of Industrial Engineering, Eindhoven University of Technology, P.O. Box 513, 5600MB Eindhoven, The Netherlands, \href{mailto:iadan@tue.nl}{iadan@tue.nl}}
\quad
V.G. Kulkarni\thanks{Department of Statistics and Operations Research, University of North Carolina, Chapel Hill, NC 27599, USA, \href{mailto:vkulkarn@email.unc.edu}{vkulkarn@email.unc.edu},
Research supported by Visitor's Travel Grant of the Netherlands Organization for Scientific Research (NWO)} \quad
N. Lee\thanks{Department of Statistics and Operations Research, University of North Carolina, Chapel Hill, NC 27599, USA}
\quad
A.A.J. Lefeber
\thanks{Department of Mechanical Engineering, Eindhoven University of Technology, P.O. Box 513, 5600MB Eindhoven, The Netherlands, \href{mailto:a.a.j.lefeber@tue.nl}{a.a.j.lefeber@tue.nl}}
}
\date{}
\begin{document}

\maketitle

\begin{abstract}
We consider a polling system with two queues, exhaustive service, no switch-over times and exponential service times with rate $\mu$ in each queue. The waiting cost depends on the position of the queue relative to the server: It costs a customer $c$ per time unit to wait in the busy queue (where the server is) and $d$ per time unit in the idle queue (where no server is). Customers arrive according to a Poisson process with rate $\lambda$. We study the control problem of how arrivals should be routed to the two queues in order to minimize expected waiting costs and characterize individually and socially optimal routing policies under three scenarios of available information at decision epochs: no, partial and complete information. In the complete information case, we develop a new iterative algorithm to determine individually optimal policies, and show that such policies can be described by a switching curve. We conjecture that a linear switching curve is socially optimal, and prove that this policy is indeed optimal for the fluid version of the two-queue polling system.\\

\noindent
{\it Keywords:} Customer routing; dynamic programming; fluid queue; linear quadratic regulator; Nash equilibrium; polling system; Ricatti equation; social optimum.\\

\noindent{\it 2010 Mathematics subject classification:} Primary 60K25; Secondary 90B22.
\end{abstract}

\section{Introduction}
Polling systems have applications in diverse fields such as manufacturing, telecommunications, time-sharing computer systems, wireless networks, to name a few. There is a very large body of research work in polling systems, and we refer the readers to a few survey papers for the full range of issues in polling systems that researchers have studied: see the book by Takagi~\cite{T86}, and the review papers of Levy and Sidi~\cite{LS90}, Vishnevskii and Semenova ~\cite{VS06} and Boon et al.~\cite{BM11}.

The early work considered a simple polling system consisting of a single server serving $N$ queues in an exhaustive cyclic fashion, which means that it serves the customers in the $i$-th queue until it becomes empty and then moves to queue $i+1$ (or 1 if $i=N$). Results were obtained about the limiting distribution of the number of customers in the $N$ queues, their means, and waiting times, and so on. These results were quickly extended to service policies other than exhaustive, such as e.g. gated, $k$-limited and Bernoulli, as well as non-cyclic server routing, non-zero switch over times, and so on. We refer the reader to the sources mentioned above for the detailed references.

The issues of control of polling systems have received less attention than the performance analysis of polling systems. There are several possible control problems arising in polling systems. First, the order in which the queues are served can be determined to optimize system performance (such as weighted expected waiting times), assuming that the service discipline is fixed (such as exhaustive, or gated); see Boxma et al.~\cite{BL91}, Yechiali~\cite{Y93}, van der Wal and Yechiali~\cite{VY03}. When the server can switch after every service, the optimal dynamic service order is studied in greater detail, and may lead to simple rules like the $c\mu$ rule; for example, see Klimov~\cite{K74,K78}, Haijema and van der Wal~\cite{H08}. We refer to
Vishnevskii and Semenova~\cite{VS06} for many more papers in this area.

Customer routing in polling systems is a less studied area. Takine et al.~\cite{TT91}, Sidi et al.~\cite{SL92} and Boon et al.~\cite{BM13} study a Jackson network style routing of customers among $N$ queues, served cyclically by a single server. The control of customer routing is the focus of the current paper. This subject is also less studied in comparison with the control of server routing. To the best of our knowledge, the only paper on this topic is by Sharafali et al~\cite{SC04}. They consider the problem where the customers arriving at one of the queues can be routed to any of the others, while customers arriving at the other queues have no flexibility. They study static randomized routing policies and study the optimal fraction to be routed to each queue in order to minimize the weighted expected waiting cost. This comes somewhat close to our model that we will describe in the next section. However, our cost model is very different from the one in \cite{SC04} and we consider optimal policies under several scenarios of availability of information, and who is controlling the system. We consider a cyclic exhaustive polling system where every arriving customer needs to be routed to one of the queues, and distinguish three levels of information knowledge:

\begin{enumerate}
\item[i.] {\em No Information.} We do not know the queue lengths or the position of the server (that is, which queue the server is serving) at decision epochs.
\item[ii.] {\em Partial Information.} We know where the server is, but not the queue lengths, at decision epochs.
\item[iii.] {\em Complete Information.} We know the position of the server, and the queue lengths, at decision epochs.
\end{enumerate}

We assume that the waiting cost in a queue depends on its downstream position from the server. This is motivated by the tradeoff one normally encounters: we might be able to reduce the waiting cost by joining a queue farther from the server, but it will increase the total waiting time.  One motivation for such a case arises from appointment systems, although the analogy is not quite accurate:  one can be a walk-in customer and join the queue where the server is now (today's queue at the clinic), or one can get an appointment for a later day and join a shorter queue, because waiting at home is cheaper than waiting at the clinic.

Finally, we consider the control problem from two different points of view: the customer (or individually optimal) and the system manager (socially optimal) point of view .  Socially versus individually optimal policies have been well studied in the queueing literature, see Lippman and Stidham~\cite{LS77} and Hassin and Haviv~\cite{HH03}. Computation of individually optimal policies becomes complicated when the decisions of later customers can influence a customer's waiting cost. For example,  Altman and Shimkon~\cite{AS98} study individually optimal policies in  processor sharing queues, where the decisions by later customers influence one's waiting costs, because they affect the effective service rate available to any customer. They also introduce an iterative algorithm to find Nash equilibria. In our case, the analysis of individually optimal policies is similarly complicated by the fact that a customer's total cost is affected by the behavior of the  customers arriving after her. We provide a new iterative algorithm to derive Nash equilibria in such a case.

To keep the analysis simple, we consider an exponential system with only two queues and no switch-over times. Even for such a simple system, the analysis provides interesting insights, and can be quite involved. We introduce the model and notation in Section \ref{sec:mod}. The case of no information is studied in Section \ref{sec:noinfo}, and partial information in Section \ref{sec:partinfo}. In both cases, we study Nash equilibria in the individually optimal analysis, and minimize the long-run average cost rate in the socially optimal case. The case of complete information is studied in Section \ref{sec:cominfo}.  We present a new iterative algorithm to determine individually optimal policies, and show that such policies can be described by a switching curve. The socially optimal policies can be derived by using negative dynamic programming. We present a novel proof of existence of average cost optimal policies, but we have not been able to derive structural results in that case. However, numerical experimentation suggests a simple approximate socially optimal routing policy, which can described by a linear switching curve. In Section \ref{sec:fluid} we formulate the problem as a control problem of a fluid polling queueing system, and prove that the approximate policy conjectured in Section \ref{sec:cominfo} is in fact optimal in the fluid model. We end the paper with a numerical example and a brief summary in Section \ref{sec:num}.

\section{Polling Model}
\label{sec:mod}

We consider a polling system with two queues; see Figure~\ref{fig:model}.
\begin{figure}[ht]
\centering
\includegraphics[width=0.6\linewidth]{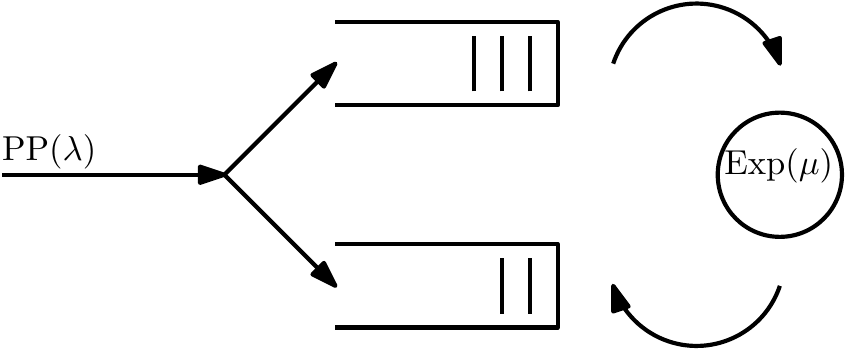}
\caption{The two-queue exponential polling model with exhaustive service.}\label{fig:model}
\end{figure}
Customers arrive
at this system according to a Poisson process with rate $\lambda$. The service times are
independent and exponentially distributed with rate $\mu$ at each queue.  A single server serves the two nodes in a cyclic
fashion with exhaustive service.  The switch-over times are assumed
to be zero. For stability we assume $\rho = \lambda/\mu < 1$. The only costs in the system are waiting costs: It costs a customer $c$ dollars to wait in a queue that is being served (called the busy queue), while it costs her
$d$ per unit time to wait in a queue that is not being served
(called the idle queue). We aim to study how the arrivals should be
routed to the two queues in order to minimize expected waiting costs. In the following sections we characterize individually optimal and socially optimal routing policies under various levels of information knowledge: No information, partial information and complete information.

\section{No Information}
\label{sec:noinfo}

Suppose that arriving customers have no information about the state of the
system, i.e., they do not know where the server is and what the
queue lengths are. In this case, the most general
policy  is described by a single parameter $p \in [0,1]$ as follows:
each customer joins queue $1$ with probability $p_1 = p$ and queue $2$
with probability $p_2 = 1-p_1$.  We use the notation $\rho_i = \rho p_i$ ($i=1,2$). Define $L_{ij}$ as the
expected number of customers  in queue $i$ given that the server is
serving queue $j$.  The next theorem gives these quantities; see e.g. Winands et al.~\cite{W06}, Winands~\cite{W07} and Boon~\cite{B11}.

\begin{theorem}\label{th:LB}
Under the above routing policy we have, for all $0 \le p \le 1$,
\begin{eqnarray}
L_{11} & = & \frac{\rho_1(1-\rho +\rho_1\rho_2 + \rho_2^2)}{(1-\rho)(1 - \rho + 2\rho_1\rho_2)} + 1, \\
L_{12} & = & \frac{\rho_1(\rho_1\rho_2 +(1-\rho_1)^2)}{(1-\rho)(1 - \rho + 2\rho_1\rho_2)}, \\
L_{21} & = & \frac{\rho_2(\rho_1\rho_2 +(1-\rho_2)^2)}{(1-\rho)(1 - \rho + 2\rho_1\rho_2)}, \\
L_{22} & = & \frac{\rho_2(1-\rho +\rho_1\rho_2 + \rho_1^2)}{(1-\rho)(1 - \rho + 2\rho_1\rho_2)} + 1.
\end{eqnarray}
\end{theorem}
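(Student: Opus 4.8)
These are standard polling-system identities (as the cited references indicate); the most transparent self-contained derivation is via the generating function of the underlying Markov chain. The plan is to take $X(t) = (N_1(t), N_2(t), S(t))$, where $N_i(t)$ is the length of queue $i$ and $S(t) \in \{1, 2\}$ records which queue is actively being served, treating the states with both queues empty (the server idling) as a separate class, which for definiteness I split into $\emptyset_1, \emptyset_2$ according to the queue served last. For $\rho < 1$ this chain is ergodic; writing $\pi_j(n_1, n_2)$ for the stationary probability of $(n_1, n_2, j)$ and $F_j(x, y) = \sum_{n_1, n_2 \ge 0} \pi_j(n_1, n_2)\, x^{n_1} y^{n_2}$, the target quantities are $L_{1j} = \partial_x F_j(1,1)/F_j(1,1)$ and $L_{2j} = \partial_y F_j(1,1)/F_j(1,1)$. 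A rate-conservation argument supplies one normalizing constant for free: the long-run fraction of time the server actively serves queue $j$ equals (completion rate of queue-$j$ services)\,$\times$\,(mean service time) $= \lambda p_j/\mu = \rho_j$, so $F_j(1,1) = \rho_j$ and $\pi(\emptyset_1) + \pi(\emptyset_2) = 1 - \rho$. Hence it suffices to obtain the gradients of $F_1, F_2$ at $(1,1)$, for which I would first determine $F_1, F_2$ in closed form.

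Translating the balance equations (multiply by $x^{n_1} y^{n_2}$ and sum) gives a coupled pair of functional equations
\begin{equation*}
x\, K_1(x, y)\, F_1(x, y) = \Phi_1(x, y), \qquad y\, K_2(x, y)\, F_2(x, y) = \Phi_2(x, y),
\end{equation*}
with kernels $K_1(x, y) = \lambda + \mu - \lambda p_1 x - \lambda p_2 y - \mu/x$ and $K_2(x, y) = \lambda + \mu - \lambda p_1 x - \lambda p_2 y - \mu/y$ (so $xK_1$ is quadratic in $x$ and $yK_2$ quadratic in $y$ for each fixed value of the other variable), and with right-hand sides $\Phi_j$ that are affine in a handful of unknown single-variable boundary functions — essentially $F_1(\cdot, 0)$, $F_2(0, \cdot)$ and the probabilities $\pi(\emptyset_i)$ — plus a term feeding $F_2$ into the equation for $F_1$ and vice versa; this coupling enters only through the ``the served queue empties'' transition, because under exhaustive service the queue being served behaves, while it is served, as a draining $M/M/1$ queue. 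To resolve the unknowns I would use: (i) analyticity of each $F_j$ in the closed unit bidisk, so $\Phi_j$ must vanish at the zeros of the kernel inside the bidisk — for $|y| \le 1$ the quadratic $xK_1(x,y) = 0$ has a unique root $x = \chi_1(y)$ with $|\chi_1(y)| \le 1$ by Rouch\'e, giving one scalar identity per $y$, and symmetrically for $K_2$; (ii) the matching of the two server-position sectors along the switching boundary and the balance equations at $\emptyset_1, \emptyset_2$; (iii) the normalization above. This pins down the boundary functions and hence $F_1$ and $F_2$.

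Finally I would expand near $(x, y) = (1, 1)$. Since $K_1(1,1) = \lambda + \mu - \lambda p_1 - \lambda p_2 - \mu = 0$, the point $(1,1)$ is a removable zero of the kernel, so $F_j(1,1) = \rho_j$ and the derivatives $\partial_x F_j(1,1)$, $\partial_y F_j(1,1)$ are extracted by differentiating the functional equations once and twice and applying L'H\^opital in the appropriate direction; dividing the first derivatives by $\rho_j$ and simplifying yields the four stated expressions. I expect steps (i)--(ii) — recovering the boundary functions from the kernel — to be the only genuinely delicate part, this being the familiar obstacle for two-dimensional queueing random walks; here, though, exponential service together with the exhaustive discipline collapses it to a finite linear system rather than a true boundary-value problem, and the rest is bookkeeping and algebraic simplification. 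As consistency checks I would verify the symmetry of the four formulas under $(1 \leftrightarrow 2,\ \rho_1 \leftrightarrow \rho_2)$, their reduction as $p \to 1$ (i.e. $\rho_2 = 0$) to the $M/M/1$ values $L_{11} = 1/(1-\rho)$, $L_{21} = 0$, and agreement with the known exhaustive-polling mean queue length through $\rho_1 L_{11} + \rho_2 L_{12} = E[N_1]$.
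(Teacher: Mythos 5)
The paper does not actually prove Theorem~\ref{th:LB}: the formulas are imported from the polling literature (the mean value analysis of Winands et al.\ and the theses of Winands and Boon), so the relevant comparison is between your plan and that MVA derivation. Your generating-function route is legitimate in principle, but as written it is a roadmap, not a proof: the content of the theorem is four explicit formulas, and they are never derived. You yourself flag ``recovering the boundary functions from the kernel'' as the only delicate step and then leave exactly that step undone, describing the remainder as bookkeeping. That is the gap.

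Beyond incompleteness, two specific points in the plan would need repair before the bookkeeping could even begin. First, the boundary unknowns are misidentified. In the sector where queue 1 is served there is no reflection at $n_2=0$ (queue 2 only accumulates arrivals), so $F_1(\cdot,0)$ plays no special role; the truncation comes from the service-completion term at $n_1=1$, which triggers the switch, so the unknown single-variable functions are $G_1(y)=\sum_{n_2}\pi_1(1,n_2)y^{n_2}$ and, feeding the coupling term, $\sum_{n_1}\pi_2(n_1,1)x^{n_1}$ (and symmetrically). Second, the claim that exhaustive exponential service ``collapses it to a finite linear system rather than a true boundary-value problem'' is asserted, not shown, and for the full generating functions it is not obviously true; what \emph{is} true --- and is precisely the content of the cited MVA approach --- is that the conditional \emph{means} satisfy a closed finite linear system once one introduces moments at visit beginnings/ends. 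Relatedly, ``differentiating the functional equations once and twice and applying L'H\^opital'' is not automatic: each differentiation at $(1,1)$ introduces new unknowns (second moments of $F_j$ and first moments of the boundary functions $G_j$), and the classical feature of polling systems is that time-stationary mean queue lengths depend on second moments at polling instants, so you must exhibit the extra relations that close this system. Your consistency checks ($F_j(1,1)=\rho_j$, the $p\to1$ reduction to $L_{11}=1/(1-\rho)$, and $\rho_1L_{11}+\rho_2L_{12}=E[N_1]$) are all correct and worth keeping, but they verify rather than establish the formulas.
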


Using the above theorem we derive the socially optimal policies in the next theorem.

\begin{theorem}\label{th:nosoc} {\bf (Socially Optimal Policies)}
\begin{enumerate}
\item[i.]
If $c > d$, there is a unique socially optimal policy: $p = \frac12$ (join either queue with probability $\frac12$).
\item[ii.]
If $c = d$, all policies $p \in [0,1]$ are socially optimal.
\item[iii.]
If $c < d$, there are two socially optimal policies: $p = 0$ and $p = 1$ (everyone joins queue 1 or everyone joins queue 2).
\end{enumerate}
\end{theorem}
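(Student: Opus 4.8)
The plan is to compute the long-run average cost rate $g(p)$ as an explicit function of $p$, using Theorem~\ref{th:LB}, and then minimize it over $[0,1]$. By symmetry of the system and the arrival process, in steady state the server is serving queue $1$ or queue $2$ each with probability $\tfrac12$ (for $0<p<1$; the boundary cases $p\in\{0,1\}$ must be handled separately since then one queue is always empty). Hence the long-run expected number of customers in the busy queue is $\tfrac12(L_{11}+L_{22})$ and in the idle queue is $\tfrac12(L_{12}+L_{21})$, so by the standard renewal-reward / PASTA argument the average cost rate is
\begin{equation}
g(p) \;=\; \frac{c}{2}\bigl(L_{11}+L_{22}\bigr) \;+\; \frac{d}{2}\bigl(L_{12}+L_{21}\bigr).
\end{equation}
First I would substitute the four formulas from Theorem~\ref{th:LB}, writing everything in terms of $\rho$ and $q:=\rho_1\rho_2 = \rho^2 p(1-p)$, which ranges over $[0,\rho^2/4]$ as $p$ ranges over $[0,1]$ (with $q=\rho^2/4$ at $p=\tfrac12$). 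The symmetric sums collapse nicely: $L_{11}+L_{22} = 2 + \dfrac{(\rho_1+\rho_2)(1-\rho) + (\rho_1+\rho_2)q + (\rho_1^3+\rho_2^3)\text{-type terms}}{(1-\rho)(1-\rho+2q)}$, and similarly for $L_{12}+L_{21}$; after expanding, every asymmetric combination like $\rho_1^2+\rho_2^2$ or $\rho_1^3+\rho_2^3$ reduces to a polynomial in $\rho$ and $q$ because $\rho_1+\rho_2=\rho$ is fixed. The upshot should be that $g$ is a ratio of low-degree polynomials in the single variable $q$, i.e. $g = A + \dfrac{B + Cq}{(1-\rho)(1-\rho+2q)}$ for constants $A,B,C$ depending on $c,d,\rho$.

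Next I would differentiate this one-variable expression with respect to $q$. A Möbius-type function $q\mapsto \dfrac{B+Cq}{1-\rho+2q}$ is monotone on the relevant interval (its derivative has constant sign, the sign of $C(1-\rho)-2B$), so $g$ is monotone in $q$ on $[0,\rho^2/4]$. Then I translate back: since $q=\rho^2p(1-p)$ is a concave parabola maximized at $p=\tfrac12$ and minimized at the endpoints, monotonicity of $g$ in $q$ forces the optimum of $g$ over $p$ to be attained either at $p=\tfrac12$ (if $g$ is increasing in $q$) or at $p\in\{0,1\}$ (if $g$ is decreasing in $q$). The sign of the relevant constant should come out proportional to $c-d$: I expect that $g$ is strictly increasing in $q$ when $c>d$, constant when $c=d$, and strictly decreasing when $c<d$, which yields the three cases. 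The case $c=d$ deserves a separate one-line remark: then $g(p) = \tfrac{c}{2}(L_{11}+L_{12}+L_{21}+L_{22})$, and the total expected number in system is insensitive to $p$ (it equals the M/M/1-type value $\rho/(1-\rho)+1$ by a work-conservation argument), so all $p$ are optimal.

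The main obstacle is the algebra: correctly expanding the four $L_{ij}$, verifying that the asymmetric numerator terms really do collapse to a function of $q$ alone, and nailing the sign of the derivative's numerator as exactly $\operatorname{sign}(c-d)$ times something manifestly positive on the parameter range $0<\rho<1$. I would double-check the final monotonicity claim by evaluating $g$ at the two extreme points $q=0$ and $q=\rho^2/4$ and confirming $g|_{p=1/2} \lessgtr g|_{p=0}$ with the sign matching $c\lessgtr d$; this also cleanly handles the boundary values $p=0,1$, where the direct steady-state argument (one queue empty, the other an M/M/1 with all cost rate $c$) gives $g = c(\rho/(1-\rho)+1)$, and one checks this agrees with the limit $q\to 0$ of the formula. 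Uniqueness in cases (i) and (iii) follows from strict monotonicity in $q$ together with strict concavity of $q(p)$.
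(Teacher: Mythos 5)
There is a genuine gap at the very first step: the claim that ``in steady state the server is serving queue $1$ or queue $2$ each with probability $\tfrac12$'' is false for $p\neq\tfrac12$. By work conservation the server is actively serving queue $i$ a fraction $\rho_i=\rho p_i$ of the time (and the system is empty a fraction $1-\rho$ of the time), so the correct average cost rate is
\begin{equation*}
g(p)\;=\;\rho_1\bigl(cL_{11}+dL_{21}\bigr)+\rho_2\bigl(cL_{22}+dL_{12}\bigr),
\end{equation*}
not $\tfrac{c}{2}(L_{11}+L_{22})+\tfrac{d}{2}(L_{12}+L_{21})$. This is not a harmless normalization: a short computation from Theorem~\ref{th:LB} shows that the numerators of $L_{11}+L_{22}$ sum to $\rho(1-\rho+2\rho_1\rho_2)$, so that $L_{11}+L_{22}=2+\rho/(1-\rho)$ is \emph{independent} of $p$. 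Hence for $d=0$ your objective is constant in $p$ and would declare every policy optimal, contradicting part (i), which asserts that $p=\tfrac12$ is uniquely optimal when $c>d$. The symmetry you invoke is only the involution $(p,\text{queue }1)\leftrightarrow(1-p,\text{queue }2)$; it does not equalize the two server-location probabilities for a fixed $p$.

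The rest of your plan (reduce to a one-variable function of $q=\rho_1\rho_2$, exploit monotonicity, handle the endpoints) is sound in spirit and would go through with the correct weights, since $\rho_1L_{11}+\rho_2L_{22}$ and $\rho_1L_{21}+\rho_2L_{12}$ are symmetric in $(\rho_1,\rho_2)$ and hence polynomials in $\rho$ and $q$. For comparison, the paper works instead with the expected cost per customer, $C=p_1C_1+p_2C_2$, where $C_i$ is the expected cost of a customer who joins queue $i$ (weighting the conditional queue lengths by $\rho_1,\rho_2$), and differentiates directly to get $\frac{dC}{dp_1}=\frac{c-d}{\mu}\cdot\frac{\rho(\rho^2+2(1-\rho))(2p_1-1)}{(1-\rho+2\rho_1\rho_2)^2}$, from which all three cases follow by inspecting the sign. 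You should redo your computation with the weights $\rho_1,\rho_2$ and verify that the sign of $dg/dp$ is $\operatorname{sign}((c-d)(2p-1))$ times a positive quantity.
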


\begin{proof}
The socially optimal policy minimizes the expected cost of a customer in steady state,
which is given by
\[ C  = p_1 C_1 + p_2 C_2 ,
\]
where $C_1$ and $C_2$ are the expected cost of joining queue $1$ and $2$, respectively,
\begin{eqnarray*}
C_1 &=& \frac{1}{\mu}\left(c(\rho_1L_{11}+\rho_2L_{12}) + d\frac{\rho_2}{1-\rho_2}L_{22}+c\right), \\
C_2 &=& \frac{1}{\mu}\left(c(\rho_1L_{21} + \rho_2L_{22}) + d\frac{\rho_1}{1-\rho_1}L_{11}+c\right) .
\end{eqnarray*}
It then follows that
\[ \frac{dC}{dp_1} = \frac{c-d}{\mu}\cdot \frac{\rho(\rho^2 + 2(1-\rho))(2p_1-1)}{(1-\rho +2\rho_1\rho_2)^2}.\]
Now if $c > d$, the above expression implies that $C$ decreases as
$p_1$ increases from 0 to $\frac12$ and increases as $p_1$ increases from
$\frac12$ to 1. Hence $C$ is minimized at $p_1 = \frac12$. If $c < d$, $C$
increases as $p_1$ increases from 0 to $\frac12$, and decreases as $p_1$
increases from $\frac12$ to 1. Thus $C$ is minimized at $p_1 =0$ and $p_1 =
1$. From symmetry, both these minima are identical. When $c=d$, $C$
does not depend on $p_1$. Hence the
result follows.
\end{proof}

The next theorem states the results about the individually optimal
(Nash equilibrium) policies.

\begin{theorem}\label{th:noNash} {\bf (Nash Equilibrium policies)}
\begin{enumerate}
\item[i.]
If $c(1-\rho) > d$, there is a unique Nash equilibrium policy: $p = \frac12$. This policy is socially optimal.
\item[ii.]
If $c(1-\rho) = d$, every policy $p \in [0,1]$ is a Nash equilibrium
policy, but only $p = \frac12$ is socially
optimal.
\item[iii.]
If $c(1-\rho) < d$, there are three Nash equilibrium policies: $p = 0$, $p=\frac12$ and
$p = 1$. Only policy $p = \frac12$ is socially
optimal if $c > d$, each of them is socially optimal if $c = d$, and $p = 0$ and
$p = 1$ are socially optimal if $c < d$.
\end{enumerate}
\end{theorem}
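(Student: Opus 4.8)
The plan is to compute the individual cost $C_1(p_1)$ of a tagged customer who joins queue~1 when the background population uses routing probability $p_1$, and similarly $C_2(p_1)$ for joining queue~2, and then characterize $p_1$ for which no customer has an incentive to deviate. A routing probability $p_1 \in (0,1)$ is a Nash equilibrium precisely when $C_1(p_1) = C_2(p_1)$ (a customer is indifferent, so randomizing is a best response), while $p_1 = 0$ is an equilibrium iff $C_2(0) \le C_1(0)$ and $p_1 = 1$ is an equilibrium iff $C_1(1) \le C_2(1)$. The key inputs $C_1$ and $C_2$ are exactly the expressions already written down in the proof of Theorem~\ref{th:nosoc}, with the $L_{ij}$ given by Theorem~\ref{th:LB}; note that these are genuinely the \emph{individual} costs (expected cost incurred by one customer who joins queue $i$), which is why the social cost is the average $C = p_1 C_1 + p_2 C_2$.

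The main computational step is to form the difference $\Delta(p_1) := C_2(p_1) - C_1(p_1)$ and simplify it using the formulas for $L_{11}, L_{12}, L_{21}, L_{22}$. I expect, after algebra, that $\Delta(p_1)$ factors with a sign-definite positive denominator and a numerator that is affine (or at worst has a single relevant sign change) in $p_1$, and that by the $1 \leftrightarrow 2$ symmetry of the model $\Delta(\tfrac12) = 0$, so $p_1 = \tfrac12$ is always an equilibrium. The crucial quantity will be the sign of $\Delta$ near the endpoints: I anticipate that $\Delta(0)$ and $\Delta(1)$ carry a factor proportional to $c(1-\rho) - d$ (the threshold appearing in the statement), so that when $c(1-\rho) > d$ the function $\Delta$ has constant sign except at $p_1 = \tfrac12$ — forcing $p_1=\tfrac12$ to be the unique equilibrium — whereas when $c(1-\rho) < d$ the sign of $\Delta$ at the endpoints makes $p_1 = 0$ and $p_1 = 1$ into equilibria as well, giving the three-point equilibrium set. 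The boundary case $c(1-\rho) = d$ should make $\Delta \equiv 0$, so every $p_1$ is an equilibrium. Finally, the socially-optimal classification in each case is read off directly from Theorem~\ref{th:nosoc}, simply by intersecting its description with the candidate equilibrium set.

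The main obstacle I foresee is purely the algebraic bookkeeping: the $L_{ij}$ have a common quadratic denominator $(1-\rho)(1-\rho+2\rho_1\rho_2)$ with $\rho_1 = \rho p$, $\rho_2 = \rho(1-p)$, and the extra terms $d\frac{\rho_2}{1-\rho_2}L_{22}$ and $d\frac{\rho_1}{1-\rho_1}L_{11}$ in $C_1$ and $C_2$ introduce further factors $(1-\rho_1)$ and $(1-\rho_2)$ in denominators, so clearing everything and verifying the claimed factorization of the numerator of $\Delta(p_1)$ requires care. The conceptual content — equilibrium $\iff$ indifference in the interior, and a boundary best-response check at $p_1 \in \{0,1\}$ — is routine; establishing that the threshold is exactly $c(1-\rho) = d$ and that the endpoint equilibria appear precisely when $c(1-\rho) < d$ is where the computation must be done honestly.
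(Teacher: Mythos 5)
Your proposal is correct and follows essentially the same route as the paper: the paper's proof computes $C_1 - C_2 = \frac{\rho(1-2p_1)(d-c(1-\rho))}{\mu(1-\rho)(1-\rho+2\rho_1\rho_2)}$, which is exactly the factorization you anticipate (positive denominator, numerator with factors $(1-2p_1)$ and $d - c(1-\rho)$), and then reads off the equilibria by the same indifference/best-response logic and the socially optimal classification from Theorem~\ref{th:nosoc}. The only work left in your outline is the algebra you already flagged, and it does come out as you predict.
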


\begin{proof}
Suppose arriving customers join queue 1 with
probability $p_1 = p$ and  queue 2 with probability $p_2=1-p_1$. Now
suppose a smart customer knows how the other customers are behaving,
and decides to use this system to minimize her own waiting costs. If
she joins  queue 1, her expected cost is $C_1$ and otherwise,
if she joins  queue 2, her expected cost is $C_2$.
It then follows that
\[ C_1 - C_2 = \frac{\rho(1-2p_1)(d-c(1-\rho))}{\mu(1-\rho)(1-\rho +2\rho_1\rho_2)}.\]
Consider the case $c (1-\rho)> d$.
If all customers use $p_1 > \frac12$, then $C_1 > C_2$ and the smart customer will join
queue 2, that is, use $p_1 = 0$; and if all customers use $p_1 <
\frac12$, she will use $p_1 = 1$. Thus in these cases there is no Nash
equilibrium. If all the customers follow the policy
$p_1 = \frac12$, the smart customer is indifferent between the two
options and can choose $p_1 = \frac12$. Thus $p_1 = \frac12$ is a Nash
equilibrium. If $c (1-\rho) = d$, the smart customer is also indifferent, so all policies are a Nash equilibrium.
Next, in case $c(1-\rho) < d$, it is readily verified that there are three Nash equilibrium policies, $p_1=0$, $p_1 = \frac12$ and $p_1 =1$.
Together with Theorem \ref{th:nosoc}, this concludes the proof.
\end{proof}

\section{Partial Information}
\label{sec:partinfo}

In this section we consider the case of partial information.
Specifically, we assume that all customers know which queue is being
served by the server, but the individual queue lengths at the two
queues are not known. We call the queue that the server is at the
busy queue and the other queue the idle queue. We assume that if
both queues become empty after a service completion, the server
stays at the queue it served last. Thus the busy queue and idle queue
are well defined at all times.

Now the most general policy that a customer can follow is  described
by a single parameter $p \in [0,1]$ as follows: join the busy queue
with probability $p_1 = p$ and join the idle queue  with probability
$p_2 = 1- p_1$. Hence, under this policy, the Poisson arrival rates in the two queues depend on the server location. This system with ``smart customers'' has been analyzed by Boon et al.~\cite{BW10}.
Let $L_B$ be the expected number of customers in the
busy queue, and $L_I$ be the expected number of customers in the
idle queue, in steady state, under this policy. The next theorem
gives these two quantities, see e.g. Boon et al.~\cite{BW10}.

\begin{theorem}\label{th:parLB}
Under the above policy we have, for all $0 \le p \le 1$,
\[ L_B = \frac{\rho(1-\rho_1)}{(1-\rho_1)^2-\rho_2^2},\]
\[ L_I =\frac{\rho \rho_2}{(1-\rho_1)^2-\rho_2^2}.\]
\end{theorem}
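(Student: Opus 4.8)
My plan is to reduce the model to its ``busy/idle'' Markov chain and then determine $L_B$ and $L_I$ from two linear relations. By rotational symmetry I track the pair $(n_B,n_I)$ of queue lengths in the busy and in the idle queue; this is a positive recurrent CTMC (for $\rho<1$) with $L_B=E_\pi[n_B]$ and $L_I=E_\pi[n_I]$. Writing $\lambda_1=\lambda p_1$ and $\lambda_2=\lambda p_2$, its transitions are: from $(0,0)$ any arrival leads (after an instantaneous server switch if needed) to $(1,0)$, at total rate $\lambda$; from $(n,m)$ with $n\ge1$, an arrival goes to $(n+1,m)$ at rate $\lambda_1$ or to $(n,m+1)$ at rate $\lambda_2$, a service completion goes to $(n-1,m)$ at rate $\mu$ when $n\ge2$, and from $(1,m)$ a service completion empties the busy queue so the server switches and the state becomes $(m,0)$ (with $(1,0)\to(0,0)$ when $m=0$).

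The first relation is immediate: as a marginal process the total $N=n_B+n_I$ is a birth--death chain with birth rate $\lambda$ and death rate $\mu$ for every $N\ge1$, since each arrival raises $N$ by one and each service completion --- the ``switch'' transition $(1,m)\to(m,0)$ included --- lowers it by one. Hence $N$ is M/M/1, $\pi_{0,0}=P(N=0)=1-\rho$, and
\[ L_B+L_I=E_\pi[N]=\frac{\rho}{1-\rho}. \]

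For the second relation I use the stationary identity $E_\pi[\mathcal{A}g]=0$, where $\mathcal{A}$ is the generator, with the test function $g(n_B,n_I)=n_Bn_I$. A short computation gives $\mathcal{A}g(0,0)=0$ and, for $n\ge1$, $\mathcal{A}g(n,m)=\lambda_1m+\lambda_2n-\mu m=\lambda_2n-(\mu-\lambda_1)m$; the point is that a service completion changes $g$ by $-m$ whether $n\ge2$ (one moves to $(n-1,m)$) or $n=1$ (one moves to $(m,0)$ or $(0,0)$, where $g=0$). Since $(0,0)$ is the only state with $n_B=0$, taking $\pi$-expectations yields $\lambda_2L_B=(\mu-\lambda_1)L_I$, that is $\rho_2L_B=(1-\rho_1)L_I$. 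Combining this with the first relation and using $(1-\rho)(1-\rho_1+\rho_2)=(1-\rho_1)^2-\rho_2^2$ gives the two stated expressions.

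I do not anticipate a deep obstacle, but two points deserve care. First, the transition structure must be set up so that the server leaves an exhausted busy queue the instant the other queue is nonempty (it idles only when both queues are empty); this is what produces the long jump $(1,m)\to(m,0)$ and what makes $N$ behave as M/M/1. Second, the identity $E_\pi[\mathcal{A}g]=0$ needs justification, which here is routine because $\mathcal{A}g$ is linear in $(n_B,n_I)$ and the M/M/1 marginal makes all moments of $\pi$ finite. A more systematic alternative is to derive the kernel equation $((\lambda+\mu)x-\lambda_1x^2-\lambda_2xy-\mu)F(x,y)=x(\mu(\phi(x)-\phi(y))+\lambda\pi_{0,0}(x-1))$ for $F(x,y)=\sum_{n\ge1,m\ge0}\pi_{n,m}x^ny^m$ and $\phi(y)=\sum_{m\ge0}\pi_{1,m}y^m$, and to recover the same two relations by setting $y=x$ and by taking $\partial_x\partial_y$ at $(1,1)$; or one may simply quote the known mean-queue-length formulas for exhaustive polling systems (e.g.\ Boon et al.).
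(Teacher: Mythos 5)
Your proposal is correct, and it is genuinely different from what the paper does: the paper offers no proof of Theorem~\ref{th:parLB} at all, simply quoting the mean queue lengths from the ``smart customers'' polling analysis of Boon et al.~\cite{BW10}, which rests on a full generating-function/MVA treatment of the model. Your argument is a short, self-contained alternative that only needs two linear relations. The first, $L_B+L_I=\rho/(1-\rho)$, is exactly the work-conservation observation the paper itself exploits later (Section~\ref{sec:cominfo}): the total content is M/M/1 regardless of routing. The second is the nice part: with $g(n_B,n_I)=n_Bn_I$, the exhaustion jump $(1,m)\to(m,0)$ lands on a state where $g=0$ and hence changes $g$ by $-m$, \emph{exactly} as an ordinary departure $(n,m)\to(n-1,m)$ does, so $\mathcal{A}g=\lambda_2 n_B-(\mu-\lambda_1)n_I$ uniformly on $n_B\ge 1$ and vanishes at $(0,0)$; the resulting balance $\rho_2L_B=(1-\rho_1)L_I$ together with the factorization $(1-\rho)(1-\rho_1+\rho_2)=(1-\rho_1)^2-\rho_2^2$ gives the theorem. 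I verified the generator computation at the boundary states $(1,m)$ and $(0,0)$ and the algebra; both are right. The only step needing justification is $E_\pi[\mathcal{A}g]=0$, and your remark suffices: the total rate of change of $|g|$ out of state $(n,m)$ is at most $(\lambda+\mu)(n+m)$, which has finite $\pi$-expectation because the M/M/1 marginal has finite mean, so the standard stationarity identity applies. What your route buys is independence from the polling literature and a derivation that makes transparent \emph{why} only the combination $(1-\rho_1)$ versus $\rho_2$ enters; what it does not give (and what \cite{BW10} does) is the full distribution or the per-queue quantities of Theorem~\ref{th:LB}, for which the test-function trick alone would not close the system.
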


Using the above theorem we derive the socially optimal  policies in
the next theorem.

\begin{theorem}\label{th:parsoc} {\bf (Socially Optimal Policies)}
\begin{enumerate}
\item[i.]
If $c > d$, the socially optimal policy is for everyone to join the idle queue.
\item[ii.]
If $c = d$, all policies are socially optimal.
\item[iii.]
If $c < d$, the socially optimal policy is for everyone to join the busy queue.
\end{enumerate}
\end{theorem}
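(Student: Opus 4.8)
The plan is to mimic the proof of Theorem~\ref{th:nosoc}: express the objective as an explicit function of the single policy parameter $p=p_1$ and study its monotonicity on $[0,1]$. First I would note that, at any instant, the cost rate equals $c$ times the number in the busy queue plus $d$ times the number in the idle queue, so by the ergodic theorem the long-run average cost rate under the policy $p$ is
\[
 g(p)\;=\;c\,L_B+d\,L_I,
\]
with $L_B,L_I$ given by Theorem~\ref{th:parLB}. (One could equally work with the per-customer expected cost $C=p_1C_1+p_2C_2$ as in the no-information case; the two objectives differ only by the factor $\lambda$, so minimizing either gives the same policies.)

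Next I would simplify $g$. Using $\rho_1+\rho_2=\rho$, the common denominator factors as $(1-\rho_1)^2-\rho_2^2=(1-\rho)(1+\rho-2\rho_1)$, and the numerator becomes $\rho\big[c(1-\rho_1)+d\rho_2\big]=\rho\big[(c+d\rho)-(c+d)\rho_1\big]$, whence
\[
 g(p)\;=\;\frac{\rho\big[(c+d\rho)-(c+d)\rho_1\big]}{(1-\rho)\,(1+\rho-2\rho_1)},\qquad \rho_1=\rho p.
\]
Differentiating this ratio of two affine functions of $\rho_1$ (quotient rule; the $\rho_1$-terms in the numerator of the derivative cancel, leaving the constant $(c-d)(1-\rho)$) and then using $d\rho_1/dp=\rho$ gives
\[
 \frac{dg}{dp}\;=\;\frac{\rho^{2}(c-d)}{\big(1+\rho-2\rho_1\big)^{2}}.
\]
Since $\rho_1\le\rho<1$ we have $1+\rho-2\rho_1\ge 1-\rho>0$, so the denominator never vanishes and $dg/dp$ has the constant sign of $c-d$ on all of $[0,1]$.

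Finally I would read off the three cases. If $c>d$, then $g$ is strictly increasing, hence minimized at $p=0$, i.e.\ everyone joins the idle queue; if $c<d$, then $g$ is strictly decreasing, hence minimized at $p=1$, i.e.\ everyone joins the busy queue; and if $c=d$, then $g$ is constant, so every $p\in[0,1]$ is socially optimal. The only point requiring care is the positivity bookkeeping --- checking that $(1-\rho_1)^2-\rho_2^2>0$ and $1+\rho-2\rho_1>0$ throughout $p\in[0,1]$, so that $g$ is well defined and the sign analysis of the derivative is valid; everything else is a one-line computation. Among the structural results in the paper this is the easiest, so I do not anticipate any substantive obstacle.
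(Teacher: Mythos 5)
Your proof is correct, and the computation checks out: the denominator factors as $(1-\rho_1)^2-\rho_2^2=(1-\rho)(1+\rho-2\rho_1)$, the derivative's numerator collapses to $(c-d)(1-\rho)$, and the resulting sign analysis is exactly what is needed. The route differs from the paper's in how the objective is set up. The paper minimizes the steady-state expected cost per customer, decomposed as $C=p_1C_B+p_2C_I$, which requires computing $C_I$ via a busy-period argument (the waiting customer in the idle queue pays $d$ for $L_B$ busy periods, giving the factor $L_B/(1-\rho_1)$) before simplifying to a function of $\rho_1$. You instead minimize the long-run average cost \emph{rate} $g=cL_B+dL_I$, which reads the objective directly off Theorem~\ref{th:parLB} with no sojourn-time bookkeeping; the price is that you must invoke the conservation law $g=\lambda C$ (an $H=\lambda G$ argument) to justify that the two criteria have the same minimizers --- a standard fact, which you flag, and which can be verified here by direct comparison with the paper's simplified expression for $C$. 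The two derivatives agree up to the factor $\lambda$, i.e.\ $\tfrac{dg}{d\rho_1}=\rho(c-d)/(1+\rho-2\rho_1)^2$ versus the paper's $\tfrac{dC}{d\rho_1}=(c-d)/\bigl(\mu(1-\rho_1+\rho_2)^2\bigr)$, so the case analysis is identical. Your version is arguably the cleaner of the two; the paper's version has the advantage that $C_B$ and $C_I$ are reused verbatim in the Nash-equilibrium analysis of Theorem~\ref{th:parNash}.
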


\begin{proof}
The socially optimal policy minimizes the  expected
cost of a customer in steady state. The expected cost of an arriving
customer in steady state is given by
\[
C = p_1 C_B + p_2 C_I ,
\]
where
$C_B$ and $C_I$ are the expected cost of joining the busy and idle queue, respectively,
\begin{eqnarray*}
C_B &=& \frac1{\mu} \left( c L_B + c \right), \\
C_I &=& \frac1{\mu} \left( d L_B \frac1{1-\rho_1} + c L_I + c \right).
\end{eqnarray*}
This can be simplified to get
\[
C = \frac{c}{\mu (1-\rho)} + \frac{(d-c)\rho_2}{\mu(1-\rho)(1+\rho_2-\rho_1)}.
\]
Using $\rho_2 = \rho-\rho_1$, direct calculations show that
\[
\frac{dC}{d\rho_1} = \frac{c-d}{\mu(1-\rho_1 + \rho_2)^2}.
\]
Thus if $c > d$, $C$ is an increasing function of $\rho_1$,  hence
it is minimized at $\rho_1 = 0$. That is, the socially optimal
policy is for everyone to join the idle queue.  On the other hand,
if $c < d$, $C$ is a decreasing function of $\rho_1$, hence it is
minimized at $\rho_1 = 1$. Then the socially optimal policy is
for everyone to join the busy queue. If $c = d$, then the cost does
not depend on $\rho_1$, and all policies are optimal.
\end{proof}

The next theorem states the results about the individually  optimal
(Nash equilibrium) policies.

\begin{theorem}\label{th:parNash} {\bf (Nash Equilibrium Policies)}
\begin{enumerate}
\item[i.]
If $c(1-\rho) > d$: the Nash equilibrium is policy is to join the idle
queue. It is also a socially optimal policy.
\item[ii.]
If $c(1-\rho) \le d < c:$ the Nash equilibrium policy is to join the
busy queue. It is not the socially optimal policy.
\item[iii.]
If $c \le d:$
the Nash equilibrium policy is to join the busy queue. It is also a
socially optimal policy.
\end{enumerate}
\end{theorem}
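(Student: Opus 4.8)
The plan is to follow the template of the proof of Theorem~\ref{th:noNash}. Fix the population policy $p_1=p$ and let a single ``smart'' customer contemplate a deviation; since one customer does not perturb the steady state, her expected cost of joining the busy queue equals $C_B$ and her expected cost of joining the idle queue equals $C_I$, where $C_B$ and $C_I$ are exactly the quantities appearing in the proof of Theorem~\ref{th:parsoc}, evaluated at the population parameter $p_1$. Consequently, $p_1=1$ is a Nash equilibrium iff $C_B\le C_I$ there, $p_1=0$ is a Nash equilibrium iff $C_B\ge C_I$ there, and an interior $p_1$ is a Nash equilibrium iff $C_B=C_I$.

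The key step is the computation of the difference $C_B-C_I$. Using $C_B=\frac1\mu(cL_B+c)$ and $C_I=\frac1\mu\big(d L_B\frac1{1-\rho_1}+cL_I+c\big)$, the formulas of Theorem~\ref{th:parLB}, and the factorization $(1-\rho_1)^2-\rho_2^2=(1-\rho)(1-\rho_1+\rho_2)>0$, I expect to obtain
\[
C_B-C_I \;=\; \frac{\rho\,\big(c(1-\rho)-d\big)}{\mu\,\big((1-\rho_1)^2-\rho_2^2\big)}.
\]
The essential feature is that the denominator is positive for every $p_1\in[0,1]$, so the sign of $C_B-C_I$ equals the sign of $c(1-\rho)-d$ and is \emph{independent of the population policy}. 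Hence: if $c(1-\rho)>d$, joining the idle queue is strictly optimal against every population policy, so $p_1=0$ is the unique Nash equilibrium; if $c(1-\rho)<d$, joining the busy queue is strictly optimal against every population policy, so $p_1=1$ is the unique Nash equilibrium; and if $c(1-\rho)=d$, a customer is always indifferent, so every $p_1$ is a Nash equilibrium, in particular $p_1=1$.

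It remains to read off social optimality from Theorem~\ref{th:parsoc}, which is organised by the sign of $c-d$. If $c(1-\rho)>d$ then $c>d$ automatically, so Theorem~\ref{th:parsoc}(i) shows the Nash equilibrium ``join the idle queue'' is socially optimal --- this is case~i. If instead $c(1-\rho)\le d$, split on the sign of $c-d$: when $d<c$ (i.e.\ $c(1-\rho)\le d<c$), Theorem~\ref{th:parsoc}(i) gives the social optimum ``join the idle queue'', which differs from the Nash equilibrium ``join the busy queue'' --- this is case~ii; when $c\le d$, Theorem~\ref{th:parsoc}(ii)--(iii) makes ``join the busy queue'' socially optimal, matching the Nash equilibrium --- this is case~iii. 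Since the whole argument is driven by the single algebraic identity above, I do not anticipate a genuine obstacle; the only care needed is the bookkeeping of the three regimes of $c$ versus $d$ inside $c(1-\rho)\le d$, and noting that on the boundary $c(1-\rho)=d$ the Nash equilibrium is not unique (so ``the Nash equilibrium policy is to join the busy queue'' should there be read as ``one of the Nash equilibria is to join the busy queue'').
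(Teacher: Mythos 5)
Your proposal is correct and follows essentially the same route as the paper: compute $C_B-C_I$ from Theorem~\ref{th:parLB}, observe that its sign equals that of $c(1-\rho)-d$ independently of the population policy (your unfactored denominator $(1-\rho_1)^2-\rho_2^2=(1-\rho)(1-\rho_1+\rho_2)$ is exactly the paper's expression), and then read off social optimality from Theorem~\ref{th:parsoc}. Your remark about the boundary $c(1-\rho)=d$, where every policy is a Nash equilibrium and the theorem's statement should be read as selecting one of them, is in fact slightly more careful than the paper's own treatment of case~ii.
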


\begin{proof}
Suppose arriving customers join the busy queue  with
probability $p_1$ and the idle queue with probability $p_2=1-p_1$.
Now suppose a smart customer knows how the other customers are
behaving, and decides to use this system to minimize her own waiting
costs.
If
she joins the busy queue, her expected cost is $C_B$ and otherwise,
if she joins idle queue, her expected cost is $C_I$.
Using the formulas for $L_B$ and $L_I$ from Theorem~\ref{th:parLB} we get
\begin{equation} \label{eq:CBs}
C_B - C_I = \frac{\rho}{\mu(1-\rho)} \cdot \frac{c(1-\rho)-d}{1-\rho_1+\rho_2}.
\end{equation}
Note that the sign of $C_B-C_I$ does not depend on $p_1$,
the policy followed by all the other customers.
Now consider three cases.
\begin{enumerate}
\item[i.]
$c(1-\rho) > d$: Equation~\eqref{eq:CBs}  implies that $C_B >
C_I$, and hence the smart customer will also join the
idle queue, regardless of what the other customers are doing. Thus
joining the idle queue is a Nash equilibrium. In this case we also
have $c > d$. Hence from Theorem~\ref{th:parsoc}, the socially
optimal policy is to join the idle queue. Thus Nash equilibrium is
also the socially optimal policy.
\item[ii.]
$c(1-\rho) \le d < c:$ In this case, $C_B < C_I$, and hence
the smart customer will join the busy queue, regardless of what the
other customers are doing. Hence joining the busy queue is a Nash
equilibrium policy. However, the socially optimal policy is for
everyone to join the idle queue. Thus the Nash equilibrium is to
join the busy queue, but the socially optimal policy is to join the
idle queue! Individual optimization in this case actually
maximizes the social cost.
\item[iii.]
$c \le d$: The analysis is similar.
\end{enumerate}
\end{proof}

\begin{remark}
{\rm
We can write the condition $c(1-\rho) > d$ as
\[ \frac{c}{\mu} > \frac{d}{\mu-\lambda}.\]
The left hand side is the expected cost of waiting in the busy queue
for one service time, while the right hand side is the expected cost
of waiting in the idle queue for a busy period  initiated by a
single customer. It makes sense that the smart customers weighs
these two costs in order to make her decision, while the social
optimizer compares $c$ and $d$. This results in the Nash equilibrium
policies sending more customers to the busy queue than the socially
optimal policies.
}
\end{remark}

\section{Complete Information}
\label{sec:cominfo}

Now suppose every customer has complete knowledge of the state of
the system, namely, the server location and the length of
each queue. How would the customers use this information to decide
which queue to join?

\subsection{Single Smart Customer}

Suppose that the static routing policy $p$
is under effect (which does not have to be optimal) and that a special
customer wants to use this information to minimize her own expected total
waiting cost. She can observe the number of customers in the two
queues when she arrives at the system: $i$ in the busy queue, and
$j$ in the idle queue. Which queue
should she join?

If she joins the busy queue, her total expected cost is $ci/\mu$. If
she joins the idle queue, the total expected waiting cost is
$di/(\mu-\lambda p) + cj/\mu$. Here the first term represents the
$i$ busy periods that she must wait before the server starts serving
the idle queue (and making it the busy queue). Thus it is optimal for her
to join the queue under service if
\[\frac{ci}{\mu} < \frac{di}{\mu-\lambda p} + \frac{cj}{\mu},\]
and join the idle queue if
\[\frac{ci}{\mu} \ge \frac{di}{\mu-\lambda p} + \frac{cj}{\mu}.\]
Clearly, she could choose either queue if equality holds. If $d=0$, her decision rule
reduces to join the shortest queue. Else, the decision rule
is a linear switching curve.

\subsection{Smart Customer Population}
Now suppose all customers are smart, and each makes a decision to
minimize her own total expected waiting cost, assuming that other
customers will do the same. If $d=0$, each customer will decide to
join the shortest queue, and since this decision is independent of
how the other customers behave, this produces a Nash
equilibrium. The case $d \ge c$ is also obvious:
each customer will decide to join the busy queue, which is a Nash equilibrium.
However, the case $0 < d < c$ is not so obvious. In this case, the single smart customer's decision was made under the assumption that all other
customers join the busy queue with probability $p$ and the idle
queue with probability $1-p$. However, if every customer chooses the
policy derived by the single smart customer, then the single
customer's analysis is no longer valid.

So suppose we are given a
decision function $f:\{0,1,2,\cdots\}\times\{0,1,2,\cdots\}
\rightarrow \{I,B\}$ such that $f(i,j) = B$ ($f(i,j) = I$) implies
that an arriving customer that finds $i$ customers in the busy queue
and $j$ customers in the idle queue joins the busy (idle)  queue.
Let $\tau_f(i,j)$ be the expected time until the busy queue empties
if the system starts with $i$ customers in the busy queue, and $j$
in the idle queue, under decision function $f$. Note that $\tau_f (i,j)$ is bounded by $i / (\mu - \lambda)$, which is the expected time to empty the busy queue if all future arrivals are sent to the busy queue.
It is individually optimal to join the busy queue if
\[\frac{ci}{\mu} < d\tau_f(i,j+1) + \frac{cj}{\mu},\]
and to join the idle queue if
\[\frac{ci}{\mu} \ge d\tau_f(i,j+1) + \frac{cj}{\mu}.\]
We say that  $f^*$ is an individually optimal decision function if
\[ f^*(i,j) = B \Leftrightarrow \frac{ci}{\mu} < d\tau_{f^*}(i,j+1) + \frac{cj}{\mu},\]
and
\[ f^*(i,j) = I \Leftrightarrow \frac{ci}{\mu} \ge d\tau_{f^*}(i,j+1) + \frac{cj}{\mu}.\]
The function $f^*$ also describes a Nash
equilibrium policy.

We now present a recursive method to compute $f^*$.
We consider a finite horizon system that operates as follows. Let $n \ge 0$ be a given integer (the ``horizon''). Let $(i,j)$ be the initial state of the system ($i \ge 1, j \ge 0$). We assume that after $n$ events (arrivals or departures), arrivals are turned off and only departures are allowed to occur, and the systems stops operation once it becomes empty. Let $\delta_n(i,j)$ represent the new state of the system if a customer arrives when the horizon is $n$, and the system is in state $(i,j)$ and chooses an
action that minimizes her own cost. Let  $\tau_n(i,j)$ be the expected
time until the busy queue becomes empty if the system with horizon $n$ starts in
state $(i,j)$, and all the arrivals behave in an individually optimal way.
We have
\begin{equation}
\tau_0(i,j)  =  \frac{i}{\mu}, \quad i \ge 1, j \ge 0.
\end{equation}
This reflects that a zero horizon system has no more arrivals and hence the server completes the work in the current queue after an expected time of $i/\mu$.  Now recursively define for all $n \ge 0, i \ge 1, j \ge 0$,
\begin{eqnarray*}
\delta_{n}(i,j) & = & \left\{\begin{array}{ll}
(i+1,j) & \;\; \mbox{if} \; ci/\mu < d\tau_n(i,j+1) + cj/\mu,\\
(i,j+1) & \;\; \mbox{if} \; ci/\mu \ge d\tau_n(i,j+1) + cj/\mu,
\end{array}\right. 
\\
\tau_{n+1}(i,j) & = & \frac{1}{\lambda+\mu} + \frac{\mu}{\lambda+\mu}\tau_n(i-1,j)
+ \frac{\lambda}{\lambda+\mu}\tau_n(\delta_n(i,j)), 
\end{eqnarray*}
where $\tau_n (0,j) = 0$. The next lemma formulates monotonicity properties of $\tau_n (i,j)$.

\begin{lemma}\label{lem:mono}
For all $n \ge 0, i \ge 1, j \ge 0$,
\begin{eqnarray}
  \tau_n (i,j)&\le& \tau_n (i,j+1), \label{eq:mon2}\\
  \tau_n (i,j+1) &\le& \tau_n (i+1,j), \label{eq:mon1} \\
  \tau_n (i,j) & \le & \frac{i}{\mu-\lambda} , \label{eq:mon2b}\\
  \tau_{n} (i,j) &\le& \tau_{n+1} (i,j). \label{eq:mon3}
\end{eqnarray}
\end{lemma}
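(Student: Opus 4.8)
The plan is to prove all four inequalities \eqref{eq:mon2}--\eqref{eq:mon3} \emph{simultaneously} by induction on the horizon $n$, since they are intertwined: deriving the level-$(n{+}1)$ statements requires several of the level-$n$ statements at once. For the base case $n=0$, everything is immediate from $\tau_0(i,j)=i/\mu$: inequality \eqref{eq:mon2} holds with equality, \eqref{eq:mon1} is $i/\mu\le(i+1)/\mu$, \eqref{eq:mon2b} is $i/\mu\le i/(\mu-\lambda)$, and \eqref{eq:mon3} reduces, after substituting $\tau_0$ into the recursion and using $\tau_0(\delta_0(i,j))\ge i/\mu$, to the identity $\frac{1}{\lambda+\mu}+\frac{\mu}{\lambda+\mu}\frac{i-1}{\mu}+\frac{\lambda}{\lambda+\mu}\frac{i}{\mu}=\frac{i}{\mu}$.

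For the inductive step I would assume \eqref{eq:mon2}--\eqref{eq:mon3} at level $n$ and establish them at level $n+1$ by expanding $\tau_{n+1}$ through its recursion and comparing term by term. The bound \eqref{eq:mon2b} at level $n+1$ follows from \eqref{eq:mon2b} at level $n$ alone (the busy-queue coordinate of $\delta_n(i,j)$ is at most $i+1$) together with the algebraic identity $\frac{1}{\lambda+\mu}+\frac{\mu}{\lambda+\mu}\frac{i-1}{\mu-\lambda}+\frac{\lambda}{\lambda+\mu}\frac{i+1}{\mu-\lambda}=\frac{i}{\mu-\lambda}$. For \eqref{eq:mon2} and \eqref{eq:mon1} at level $n+1$ the deterministic and service terms of the recursion are handled directly by the level-$n$ versions of \eqref{eq:mon2} and \eqref{eq:mon1}; the only real work is the arrival term, where one must compare $\tau_n(\delta_n(\cdot))$ at two neighbouring states at which $\delta_n$ may well choose differently. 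I would split into the (at most four) combinations of the two actions and dispose of each: the ``compatible'' combinations reduce to the level-$n$ monotonicity, while the single ``crossing'' combination that arises in the proof of \eqref{eq:mon2} is shown to be impossible, because the inequalities defining the conflicting choices of $\delta_n$ — comparisons of $ci/\mu$ with $d\tau_n(i,j+1)+cj/\mu$ — would force $\tau_n(i,j+1)>\tau_n(i,j+2)$, contradicting \eqref{eq:mon2} at level $n$. Finally, \eqref{eq:mon3} at level $n+1$, i.e. $\tau_{n+1}\le\tau_{n+2}$, is obtained by subtracting the two recursions: the service term is controlled by \eqref{eq:mon3} at level $n$, the harmless disagreement case $\delta_n=(i,j+1),\ \delta_{n+1}=(i+1,j)$ is closed using \eqref{eq:mon1} and \eqref{eq:mon3} at level $n$, and the opposite disagreement $\delta_n=(i+1,j),\ \delta_{n+1}=(i,j+1)$ is excluded by a one-line contradiction with \eqref{eq:mon3} at level $n$.

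I expect the main obstacle to be organizing this coupled induction cleanly — in particular, recognizing that the ``diagonal'' shift inequality \eqref{eq:mon1} is exactly the extra ingredient that makes the arrival-term comparisons go through — and managing the cases where the individually optimal action differs between the two states being compared. The delicate point is that these bad cases are self-consistently ruled out: the inequalities that would make $\delta_n$ (or $\delta_n$ versus $\delta_{n+1}$) split the ``wrong'' way directly contradict a monotonicity property already proved for $\tau_n$ at the same horizon. The boundary convention $\tau_n(0,j)=0$ requires a quick separate look at the $i=1$ subcases, but there the relevant comparisons collapse to statements like $0\le\tau_n(1,j)$ and cause no difficulty.
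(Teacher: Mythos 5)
Your proposal is correct and follows essentially the same route as the paper's proof: a simultaneous induction on $n$ over all four inequalities, with the arrival term handled by a case analysis on the actions $\delta_n$ at the two compared states, and the problematic ``crossing'' action combinations ruled out by contradiction with the monotonicity properties already established at level $n$ (including the same use of \eqref{eq:mon1} as the key extra ingredient and the same treatment of \eqref{eq:mon2b} via the telescoping identity). No substantive differences to report.
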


\begin{proof}
By induction. For $n=0$ we have
\[
\tau_0 (i,j) = \tau_0 (i,j+1) = \frac{i}{\mu} < \frac{i+1}{\mu} = \tau_0 (i+1,j)
\]
and
\[
\tau_1 (i,j) \ge \frac{1}{\lambda+\mu} + \frac{\mu}{\lambda+\mu}\frac{i-1}{\mu}
+ \frac{\lambda}{\lambda+\mu}\frac{i}{\mu}  = \frac{i}{\mu} = \tau_0 (i,j).
\]
Hence, (\ref{eq:mon2})-(\ref{eq:mon3}) hold for $n=0$. Now assume (\ref{eq:mon2})-(\ref{eq:mon3}) hold for $n$. Then we will show that these inequalities also hold for $n+1$.
To establish (\ref{eq:mon2}) for $n+1$, consider
\begin{eqnarray*}
\tau_{n+1} (i,j+1) - \tau_{n+1} (i,j) & = & \frac{\mu}{\lambda+\mu} (\tau_n(i-1,j+1)-\tau_n(i-1,j)) \\
&+& \frac{\lambda}{\lambda+\mu}(\tau_n(\delta_n(i,j+1)-\tau_n(\delta_n(i,j)) .
\end{eqnarray*}
The first term is nonnegative by (\ref{eq:mon2}). If $\delta_n (i,j+1) = (i+1,j+1)$, then for both $\delta_n (i,j) = (i+1,j)$ and $\delta_n (i,j) = (i,j+1)$, we can conclude that the second term is nonnegative by application of (\ref{eq:mon1}) and (\ref{eq:mon2}). If $\delta_n (i,j+1) = (i,j+2)$, then $\delta_n (i,j) = (i,j+1)$ by (\ref{eq:mon2}), and thus we can again conclude that the second term is nonnegative by (\ref{eq:mon2}).
For (\ref{eq:mon1}) we get
\begin{eqnarray*}
\tau_{n+1} (i+1,j) - \tau_{n+1} (i,j+1) & = & \frac{\mu}{\lambda+\mu} (\tau_n(i,j)-\tau_n(i-1,j+1)) \\
&+& \frac{\lambda}{\lambda+\mu}(\tau_n(\delta_n(i+1,j)-\tau_n(\delta_n(i,j+1)) .
\end{eqnarray*}
The first term on the right-hand side is nonnegative by (\ref{eq:mon1}). If $\delta_n (i+1,j) = (i+2,j)$, then for both $\delta_n (i,j+1) = (i+1,j+1)$ and $\delta_n (i,j+1) = (i,j+2)$, we obtain that the second term is nonnegative by (repeated) application of (\ref{eq:mon1}). If $\delta_n (i+1,j) = (i+1,j+1)$, we come to the same conclusion.
By (\ref{eq:mon2}),
\begin{eqnarray*}
\tau_{n+1} (i,j) 
&\le& \frac{1}{\lambda+\mu} + \frac{\mu}{\lambda+\mu} \tau_n (i-1,j)
+ \frac{\lambda}{\lambda+\mu} \tau_n (i+1,j),
\end{eqnarray*}
and thus by (\ref{eq:mon2b}),
\[
\tau_{n+1} (i,j) \le \frac{1}{\lambda+\mu} + \frac{\mu}{\lambda+\mu} \frac{i-1}{\mu-\lambda}
+ \frac{\lambda}{\lambda+\mu} \frac{i+1}{\mu-\lambda} = \frac{i}{\mu-\lambda}.
\]
Finally, to prove (\ref{eq:mon3}) for $n+1$,
\begin{eqnarray*}
\tau_{n+2} (i,j) - \tau_{n+1} (i,j) & = & \frac{\mu}{\lambda+\mu} (\tau_{n+1}(i-1,j)-\tau_{n}(i-1,j)) \\
&+& \frac{\lambda}{\lambda+\mu}(\tau_{n+1}(\delta_{n+1}(i,j)-\tau_n(\delta_n(i,j)) .
\end{eqnarray*}
The first term is nonnegative by (\ref{eq:mon3}). If $\delta_{n+1} (i,j) = (i+1,j)$, then for both $\delta_n (i,j) = (i+1,j)$ and $\delta_n (i,j) = (i,j+1)$, it follows that the second term is nonnegative by application of (\ref{eq:mon1}) for $n+1$ and (\ref{eq:mon3}). If $\delta_{n+1} (i,j) = (i,j+1)$, then also $\delta_n (i,j) = (i,j+1)$ by (\ref{eq:mon2}), and thus the second term is nonnegative by (\ref{eq:mon3}).
\end{proof}

The following theorem states that this recursive procedure generates an individually optimal decision function $f^*$.

\begin{theorem}\label{th:fulNash}
For all $i \ge 1, j \ge 0$,
\[
\lim_{n \rightarrow \infty} \tau_n(i,j) = \tau (i,j) = \tau_{f^*} (i,j)
\]
and
\[
\lim_{n \rightarrow \infty} \delta_n(i,j) = \delta(i,j),
\]
where $f^*$ is defined as
\begin{eqnarray*}
f^*(i,j) = B  &\Leftrightarrow& \delta(i,j) = (i+1,j),\\
f^*(i,j) = I  &\Leftrightarrow&  \delta(i,j) = (i,j+1).
\end{eqnarray*}
\end{theorem}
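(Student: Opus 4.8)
The plan is to establish convergence of the sequence $\tau_n(i,j)$ first, and then deduce convergence of the decision functions $\delta_n(i,j)$ and identify the limit with a genuine individually optimal decision function. For the first part, I would combine Lemma~\ref{lem:mono}: inequality \eqref{eq:mon3} shows that $\{\tau_n(i,j)\}_{n\ge 0}$ is nondecreasing in $n$ for each fixed $(i,j)$, and inequality \eqref{eq:mon2b} shows it is bounded above by $i/(\mu-\lambda)$, uniformly in $n$. Hence the limit $\tau(i,j)=\lim_{n\to\infty}\tau_n(i,j)$ exists and is finite. Taking $n\to\infty$ in the recursion for $\tau_{n+1}$ will give a fixed-point equation for $\tau$, provided I can pass the limit through $\tau_n(\delta_n(i,j))$; this is where the decision functions must be handled.

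Next I would argue that $\delta_n(i,j)$ stabilizes as $n\to\infty$. Recall $\delta_n(i,j)$ is determined by comparing $ci/\mu$ with $d\tau_n(i,j+1)+cj/\mu$; since $\tau_n(i,j+1)$ is monotone in $n$ and converges to $\tau(i,j+1)$, the sign of $ci/\mu - cj/\mu - d\tau_n(i,j+1)$ can change at most as the quantity crosses zero from above, and because the sequence is monotone this happens at most once. So for each $(i,j)$ there is an $N(i,j)$ beyond which $\delta_n(i,j)$ is constant, equal to some $\delta(i,j)$; define $f^*$ from $\delta$ as in the statement. The only subtlety is the boundary case of exact equality $ci/\mu = d\tau(i,j+1)+cj/\mu$ in the limit: there the prescription $ci/\mu \ge d\tau_n(i,j+1)+cj/\mu$ (choose idle) is consistent with the limiting tie-break rule defining $f^*$, since $\tau_n(i,j+1)\le\tau(i,j+1)$ means the ``$\ge$'' branch holds for all large $n$, so $\delta_n(i,j)=(i,j+1)=\delta(i,j)$ eventually; thus no ambiguity arises and $f^*$ is well-defined and satisfies exactly the fixed-point characterization given earlier for an individually optimal decision function.

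With $\delta_n(i,j)\to\delta(i,j)$ established, I would finish by taking $n\to\infty$ in the recursion. For each fixed $(i,j)$, once $n$ exceeds $N(i,j)$ and also $N(i-1,j)$ and the relevant neighbor, we have $\delta_n(i,j)=\delta(i,j)$ and the recursion reads
\[
\tau_{n+1}(i,j)=\frac{1}{\lambda+\mu}+\frac{\mu}{\lambda+\mu}\tau_n(i-1,j)+\frac{\lambda}{\lambda+\mu}\tau_n(\delta(i,j)),
\]
and letting $n\to\infty$ on both sides (using $\tau_n\to\tau$ pointwise at the finitely many arguments involved) yields
\[
\tau(i,j)=\frac{1}{\lambda+\mu}+\frac{\mu}{\lambda+\mu}\tau(i-1,j)+\frac{\lambda}{\lambda+\mu}\tau(\delta(i,j)),
\]
with $\tau(0,j)=0$. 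This is exactly the equation satisfied by $\tau_{f^*}$, the expected time to empty the busy queue under decision function $f^*$ — one should note $\tau_{f^*}$ is itself the unique bounded solution of this system, since the associated Markov chain reaches the empty-busy-queue set in finite expected time (bounded by $i/(\mu-\lambda)$ as remarked in the text), so $\tau=\tau_{f^*}$. Combined with the tie-break consistency from the previous paragraph, $f^*$ then satisfies the defining biconditionals of an individually optimal decision function, completing the proof.

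I expect the main obstacle to be the careful bookkeeping around the boundary/equality case and confirming that $f^*$ as extracted from the limit genuinely satisfies the ``$\Leftrightarrow$'' characterization rather than just one direction — in particular checking that monotonicity \eqref{eq:mon3} forces $\delta_n$ to land on the ``idle'' side for all large $n$ precisely when the strict inequality $ci/\mu<d\tau(i,j+1)+cj/\mu$ \emph{fails} in the limit. A secondary point requiring care is justifying uniqueness of the bounded fixed point $\tau_{f^*}$ so that the limit $\tau$ may be identified with it; this follows from the finite expected absorption time, but should be stated explicitly.
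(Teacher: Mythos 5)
Your first half matches the paper's proof and is in places more careful than it: the paper simply invokes \eqref{eq:mon2b}--\eqref{eq:mon3} to get monotone bounded convergence of $\tau_n(i,j)$ and asserts that the limits of $\delta_n$ exist and satisfy the fixed-point equations, whereas you actually justify the stabilization of $\delta_n(i,j)$ (the comparison $ci/\mu$ versus $d\tau_n(i,j+1)+cj/\mu$ can flip at most once because $\tau_n(i,j+1)$ is nondecreasing in $n$) and you check the tie-break consistently. That part is fine.

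The gap is in the final identification $\tau=\tau_{f^*}$, which is the heart of the paper's proof and which you dispose of by asserting that ``$\tau_{f^*}$ is the unique bounded solution \dots since the associated Markov chain reaches the empty-busy-queue set in finite expected time.'' Two problems. First, neither $\tau$ nor $\tau_{f^*}$ is bounded --- they grow linearly, $\tau(i,j)\le i/(\mu-\lambda)$ --- so the standard ``unique bounded solution of $v=Pv$'' argument ($|v|=|P^nv|\le \|v\|_\infty\,P^n\boldsymbol{1}\to 0$) does not apply. Second, once you work in the class of linearly growing functions, finite expected absorption time is not by itself a complete argument: you must play the growth of $|v|$ along the trajectory against the decay of the survival probability. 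The paper does exactly this: writing $v=\tau_{f^*}-\tau$, it shows $v=Pv$, iterates to $v=P^nv$, uses the fact that transitions only reach neighboring states so that after $n$ steps the first coordinate is at most $i+n$ and hence $|v|\le 2(n+i)/(\mu-\lambda)$ on the reachable set, couples the survival probability with the absorption time $X_i$ of the ``all arrivals join the busy queue'' random walk, and then applies Markov's inequality $P(X_i>n)\le E(X_i^2)/n^2$ so that the product $\frac{E(X_i^2)}{n^2}\cdot\frac{2(n+i)}{\mu-\lambda}$ vanishes. (A first-moment bound would also suffice, since $E(X_i)<\infty$ implies $nP(X_i>n)\to 0$, but some such quantitative step is needed.) As written, your proof defers precisely this step to a remark, and the remark's stated justification is the one that fails on unbounded functions; you need to supply the linear-growth-versus-survival-probability estimate explicitly.
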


\begin{proof}
By virtue of (\ref{eq:mon2b})-(\ref{eq:mon3}), the sequence $\tau_n(i,j)$ is non-decreasing in $n$ and bounded. Hence the limits of $\tau_n(i,j)$ and $\delta_n (i,j)$ exist and satisfy for all $i \ge 1, j \ge 0$,
\begin{eqnarray*}
\delta_(i,j) & = & \left\{\begin{array}{ll}
(i+1,j) & \;\; \mbox{if} \; ci/\mu < d\tau(i,j+1) + cj/\mu,\\
(i,j+1) & \;\; \mbox{if} \; ci/\mu \ge d\tau(i,j+1) + cj/\mu,
\end{array}\right.\\
\tau(i,j) & = & \frac{1}{\lambda+\mu} + \frac{\mu}{\lambda+\mu}\tau(i-1,j)
+ \frac{\lambda}{\lambda+\mu}\tau(\delta(i,j)),
\end{eqnarray*}
where $\tau(0,j) = 0$. The expected values $\tau_{f^*} (i,j)$ satisfy for all $i \ge 1, j \ge 0$,
\begin{eqnarray*}
\tau_{f^*}(i,j) & = & \frac{1}{\lambda+\mu} + \frac{\mu}{\lambda+\mu}\tau_{f^*}(i-1,j)
+ \frac{\lambda}{\lambda+\mu}\tau_{f^*}(\delta(i,j)), 
\end{eqnarray*}
where $\tau_{f^*} (0,j) = 0$. To prove $\tau_{f^*} (i,j) = \tau(i,j)$, consider $v(i,j) = \tau_{f^*} (i,j) - \tau (i,j)$  satisfying
\begin{eqnarray*}
v(i,j) & = & \frac{\mu}{\lambda+\mu} v(i-1,j)
+ \frac{\lambda}{\lambda+\mu}v(\delta(i,j)), \quad i \ge 1, j \ge 0,
\end{eqnarray*}
or in vector-matrix notation
\begin{equation}\label{eq:vector1}
  v = P v,
\end{equation}
where $P$ is the (transient) transition probability matrix with
\[
P_{(i,j), (i-1,j)} = 1 - P_{(i,j), \delta(i,j)} = \frac{\mu}{\lambda+\mu}, \quad i \ge 1, j \ge 0.
\]
Iterating (\ref{eq:vector1}) yields $v = P^n v$. Since transitions are restricted to neighboring states, $P^n_{(i,j), (k,l)} = 0$ for all $(k,l)$ with $k > i+n$. Hence, since $\tau (i,j)$ and $\tau_{f^*} (i,j)$ are bounded by
$i / (\mu -\lambda)$,
\begin{equation}\label{eq:vector2}
|v(i,j)| = \left| \left( P^n v \right)_{(i,j)} \right| \leq \left( P^n \boldsymbol{1} \right)_{(i,j)} \frac{2(n+i)}{\mu-\lambda} ,
\end{equation}
where $\boldsymbol{1}$ is the all-one vector. $\left( P^n \boldsymbol{1} \right)_{(i,j)}$ is the probability that the Markov chain $P$ does not reach the absorbing boundary $i=0$ in $n$ transitions, when starting in $(i,j)$. This probability is bounded by $P(X_i > n)$, where $X_i$ is the number of transitions to reach $0$ of the random walk on the non-negative integers with one-step probabilities $P_{j,j-1} = 1 - P_{j,j+1} = \frac{\mu}{\lambda+\mu}$, when it starts in state $i$. This random walk reflects that all future arrivals are sent to the busy queue. By Markov's inequality, $P(X_i > n) \le E(X_i^2) / n^2$. Hence, from (\ref{eq:vector2}),
\[
|v(i,j)| \leq P(X_i > n) \cdot \frac{2(n+i)}{\mu-\lambda} \leq \frac{E(X_i^2)}{n^2} \cdot \frac{2(n+i)}{\mu-\lambda}.
\]
Letting $n \rightarrow \infty$, we conclude that $v(i,j) = 0$, which completes the proof.
\end{proof}

Next we describe the main structural properties of the policy $f^*$.

\begin{theorem}\label{theorem:BusyForSure}
$f^*(i,j) = B$ for all $1 \leq i \leq j$.
\end{theorem}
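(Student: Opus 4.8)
The plan is to read off the result directly from the individual-optimality characterisation of $f^*$ established in Theorem~\ref{th:fulNash}, namely that $f^*(i,j)=B$ exactly when $ci/\mu < d\,\tau(i,j+1)+cj/\mu$. The underlying intuition is that when $1\le i\le j$ the busy queue is (weakly) the shorter one, so joining it costs no more in ``direct'' waiting than joining the idle queue, and it costs strictly less because the extra term $d\,\tau(i,j+1)$ — the cost incurred while the server works off the $i$ busy periods before it even reaches the idle queue — is strictly positive. So the whole argument reduces to verifying one strict inequality.

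First I would record the elementary lower bound $\tau(i,j)\ge i/\mu>0$ for $i\ge1$. This follows either by iterating \eqref{eq:mon3} of Lemma~\ref{lem:mono} to get $\tau_n(i,j)\ge\tau_0(i,j)=i/\mu$ for all $n$ and letting $n\to\infty$ via Theorem~\ref{th:fulNash}, or even more directly from the fixed-point recursion
\[
\tau(i,j)=\frac{1}{\lambda+\mu}+\frac{\mu}{\lambda+\mu}\,\tau(i-1,j)+\frac{\lambda}{\lambda+\mu}\,\tau(\delta(i,j)),\qquad \tau(0,j)=0,
\]
all of whose terms are nonnegative, so in particular $\tau(i,j+1)\ge\frac{1}{\lambda+\mu}>0$ whenever $i\ge1$. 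Then, for $1\le i\le j$ and $d>0$, I would combine $ci/\mu\le cj/\mu$ with $d\,\tau(i,j+1)>0$ to obtain
\[
\frac{ci}{\mu}\ \le\ \frac{cj}{\mu}\ <\ \frac{cj}{\mu}+d\,\tau(i,j+1),
\]
which is precisely the condition for $f^*(i,j)=B$ in Theorem~\ref{th:fulNash}, and the claim follows.

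There is no real obstacle here: the only point requiring a moment's care is that the inequality defining $f^*(i,j)=B$ is \emph{strict} (ties being resolved in favour of the idle queue), so one must invoke the strict positivity of $d\,\tau(i,j+1)$ and cannot get away with $i\le j$ alone — this is exactly where $d>0$ and $i\ge1$ enter. This theorem is the ``easy half'' of the switching-curve picture; the genuinely delicate part, which would lean much more heavily on the monotonicity estimates of Lemma~\ref{lem:mono}, is to show that for $j$ small relative to $i$ the idle queue is preferred and that the boundary between the two regions is a monotone curve.
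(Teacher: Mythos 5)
Your argument is correct and is essentially the paper's own proof: both reduce the claim to the single strict inequality $ci/\mu \le cj/\mu < cj/\mu + d\,\tau(i,j+1)$, justified by the strict positivity of $\tau(i,j+1)$ for $i\ge 1$ (the paper states this positivity without the explicit lower bound you supply, but the reasoning is the same). Your added remark that strictness is where $d>0$ enters is a fair observation about the standing assumption $0<d<c$ of this subsection, not a divergence from the paper.
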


\begin{proof}
For $1 \leq i \leq j$,
\begin{align*}
    \frac{ci}{\mu} < d \tau(i,j+1) + \frac{cj}{\mu},
\end{align*}
since $\tau(i,j+1) > 0$. Hence, $f^* (i,j) = B$ by definition.
\end{proof}

The above theorem  says that if the busy queue is no longer than the idle queue, then the individually optimal decision for any customer is to join the busy queue. The theorem below states monotonicity of the individually optimal policy in $j$.

\begin{theorem}\label{theorem:BusyJ++}
For all $i \geq 1$, $j \geq 0$, if $f^*(i,j) = B$, then $f^*(i,j+1) = B$.
\end{theorem}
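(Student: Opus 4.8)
The plan is to read off the claim directly from the defining equivalence for $f^*$ in Theorem~\ref{th:fulNash} together with the monotonicity $\tau(i,j)\le\tau(i,j+1)$. Recall that $f^*(i,j)=B$ is equivalent to $ci/\mu < d\tau(i,j+1)+cj/\mu$. So the hypothesis $f^*(i,j)=B$ is exactly the inequality $ci/\mu < d\tau(i,j+1)+cj/\mu$, and what must be shown, namely $f^*(i,j+1)=B$, is exactly $ci/\mu < d\tau(i,j+2)+c(j+1)/\mu$.

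First I would observe that the inequality $\tau_n(i,j)\le\tau_n(i,j+1)$ of Lemma~\ref{lem:mono}, equation~(\ref{eq:mon2}), holds for every $n$, hence persists after letting $n\to\infty$; applying it with $j$ replaced by $j+1$ gives $\tau(i,j+1)\le\tau(i,j+2)$. Since $d\ge 0$ and $c>0$, this yields
\begin{align*}
d\tau(i,j+2)+\frac{c(j+1)}{\mu} \;\ge\; d\tau(i,j+1)+\frac{cj}{\mu}+\frac{c}{\mu} \;>\; d\tau(i,j+1)+\frac{cj}{\mu} \;>\; \frac{ci}{\mu},
\end{align*}
where the last step is the hypothesis $f^*(i,j)=B$. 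Therefore $ci/\mu < d\tau(i,j+2)+c(j+1)/\mu$, i.e.\ $f^*(i,j+1)=B$.

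There is essentially no obstacle here: the argument needs only the monotonicity of $\tau$ in its second coordinate (already proved, modulo the trivial passage to the limit, which is justified as in the proof of Theorem~\ref{th:fulNash}) and the fact that moving a customer one step further into the idle queue raises the idle-queue penalty $cj/\mu$ by $c/\mu$ while not decreasing $\tau(i,\cdot+1)$. The only point worth a sentence is that one uses $d\ge 0$ (it is a cost rate), so that a larger value of $\tau$ can never tip the comparison back toward the busy queue; with that noted, the conclusion is immediate.
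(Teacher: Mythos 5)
Your proof is correct and is essentially identical to the paper's: both chain the hypothesis $ci/\mu < d\tau(i,j+1)+cj/\mu$ with the increase of $cj/\mu$ to $c(j+1)/\mu$ and the limit of inequality~(\ref{eq:mon2}) giving $\tau(i,j+1)\le\tau(i,j+2)$. No differences worth noting.
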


\begin{proof}
Suppose $f^*(i,j) = B$ for some $i \geq 1$, $j \geq 0$. This implies
\begin{eqnarray*}
    \frac{ci}{\mu} & < & d \tau(i,j+1) + \frac{cj}{\mu}\\
                   & < & d \tau(i,j+1) + \frac{c(j+1)}{\mu} \\
                   & \leq & d \tau(i,j+2) + \frac{c(j+1)}{\mu},
\end{eqnarray*}
where the last inequality follows from (\ref{eq:mon2}) by taking $n \rightarrow \infty$.
Hence, $f^*(i,j+1) = B$.
\end{proof}

To prove monotonicity in $i$, we first need a technical result.

\begin{theorem}\label{theorem:Concavity}
Suppose $f^*(k,j) = B$ for every $1 \le k \leq i$ and fixed $j \geq
0$. Then $\tau(k,j)$ is concave
for $1 \leq k \leq i$.
\end{theorem}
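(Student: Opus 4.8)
The plan is to turn the fixed-point recursion for $\tau$ into a scalar recursion for its first differences. Since $f^*(k,j)=B$ for every $1\le k\le i$, we have $\delta(k,j)=(k+1,j)$, so the equation for $\tau$ from Theorem~\ref{th:fulNash} becomes, for $1\le k\le i$,
\[
(\lambda+\mu)\,\tau(k,j)=1+\mu\,\tau(k-1,j)+\lambda\,\tau(k+1,j),
\]
with $\tau(0,j)=0$. Writing $\Delta(k):=\tau(k,j)-\tau(k-1,j)$ for $k\ge1$, this rearranges to the affine one-step relation
\[
\lambda\,\Delta(k+1)=\mu\,\Delta(k)-1,\qquad 1\le k\le i .
\]
Concavity of $k\mapsto\tau(k,j)$ on the relevant range is precisely the statement that $\Delta$ is non-increasing there, i.e. $\Delta(k+1)\le\Delta(k)$ for $1\le k\le i$.

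Next I would observe that $1/(\mu-\lambda)$ is the fixed point of the affine map $\Delta\mapsto(\mu\Delta-1)/\lambda$, and that it also serves as an a priori upper bound for the $\Delta(k)$. Indeed, from the one-step relation, $\Delta(k+1)\le\Delta(k)$ holds if and only if $(\mu-\lambda)\Delta(k)\le1$, i.e. $\Delta(k)\le1/(\mu-\lambda)$ (using $\rho<1$). So it suffices to prove the invariant $\Delta(k)\le1/(\mu-\lambda)$ for $1\le k\le i$, which I would do by induction on $k$. For the base case, $\Delta(1)=\tau(1,j)$, and letting $n\to\infty$ in (\ref{eq:mon2b}) of Lemma~\ref{lem:mono} gives $\tau(1,j)\le1/(\mu-\lambda)$. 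For the step, if $\Delta(k)\le1/(\mu-\lambda)$ with $1\le k\le i-1$, then since $\lambda>0$ and $\mu>0$,
\[
\Delta(k+1)=\frac{\mu\,\Delta(k)-1}{\lambda}\le\frac{\mu/(\mu-\lambda)-1}{\lambda}=\frac{1}{\mu-\lambda},
\]
closing the induction. Hence $\Delta(k)\le1/(\mu-\lambda)$ for all $1\le k\le i$, so $\Delta(k+1)\le\Delta(k)$ for $1\le k\le i$, i.e. $\Delta(1)\ge\Delta(2)\ge\cdots\ge\Delta(i+1)$; this is the desired concavity of $\tau(\cdot,j)$ on $\{0,1,\dots,i+1\}$, in particular for $1\le k\le i$.

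I expect the only delicate point to be the transition from the finite-horizon quantities to the limit: Lemma~\ref{lem:mono} and the recursions are stated for $\tau_n$ and $\delta_n$, so before running the difference argument I would record (as already done in the proof of Theorem~\ref{th:fulNash}) that $\tau(k,j)=\lim_n\tau_n(k,j)$ satisfies the fixed-point recursion with $\delta$ and inherits the bound $\tau(k,j)\le k/(\mu-\lambda)$. Apart from that the proof is a short induction; no monotonicity of $\tau$ in $i$ beyond the single bound $\tau(1,j)\le1/(\mu-\lambda)$ is needed, and no probabilistic coupling argument is required.
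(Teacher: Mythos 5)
Your proof is correct and takes essentially the same route as the paper's: both arguments reduce concavity to the uniform bound $\tau(k,j)-\tau(k-1,j)\le 1/(\mu-\lambda)$ on first differences, established by induction through the same affine one-step recursion $\lambda\Delta(k+1)=\mu\Delta(k)-1$. Your final step, reading off $\Delta(k+1)\le\Delta(k)$ directly from that relation, is a marginally cleaner packaging of the paper's explicit second-difference computation, but the substance is identical.
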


\begin{proof}
Fix $j \ge 0$. First, we show by induction that for all $1 \le k \le i-1$,
\begin{align}
    \tau(k+1,j) - \tau(k,j) \leq \frac{1}{\mu-\lambda} . \label{inequality:TauUpperbound}
\end{align}
For $k = 1$,
\begin{eqnarray*}
    \tau(2,j) - \tau(1,j) & = &\tau(2,j) - \frac{1}{\lambda+\mu} - \frac{\lambda}{\lambda+\mu} \ \tau(2,j)\\
                          & = &\frac{\mu}{\lambda+\mu} \ \tau(2,j) - \frac{1}{\lambda+\mu} \\
                          & \leq &\frac{\mu}{\lambda+\mu} \ \frac{2}{\mu-\lambda} - \frac{1}{\lambda+\mu} \; = \; \frac{1}{\mu-\lambda},
\end{eqnarray*}
where the inequality follows from the bound $\tau(2,j) \le 2 / (\mu - \lambda)$.
Now we assume that (\ref{inequality:TauUpperbound}) holds for $k \le i-2$ and then show that it also holds for $k+1$.
\begin{eqnarray*}
    \tau(k+2,j) - \tau(k+1,j) & = &\tau(k+2,j) - \frac{1}{\lambda+\mu} - \frac{\mu}{\lambda+\mu}\tau(k,j) - \frac{\lambda}{\lambda+\mu}\tau(k+2,j) \\
                              & = & \frac{\mu}{\lambda+\mu} \ [\tau(k+2,j)-\tau(k,j)] - \frac{1}{\lambda+\mu} \\
                              & = & \frac{\mu}{\lambda+\mu} \ [\tau(k+2,j)-\tau(k+1,j)] + \frac{\mu}{\lambda+\mu} \ [\tau(k+1,j)-\tau(k,j)]\\
                              &   &  - \frac{1}{\lambda+\mu} .
\end{eqnarray*}
Hence
\begin{eqnarray*}
    \tau(k+2,j) - \tau(k+1,j) & = & \frac{\mu}{\lambda} \ [\tau(k+1,j)-\tau(k,j)] - \frac{1}{\lambda} \\
                              & \leq & \frac{\mu}{\lambda(\mu-\lambda)} -  \frac{1}{\lambda} \; = \; \frac{1}{\mu-\lambda} ,
\end{eqnarray*}
which concludes the proof of (\ref{inequality:TauUpperbound}).
Next, to establish concavity, we have for $1 \leq k \leq i-2$,
\begin{align*}
    & [\tau(k+2,j)-\tau(k+1,j)] - [\tau(k+1,j)-\tau(k,j)]\\
    & \quad = \; \tau(k+2,j) - 2 \left[\frac{1}{\lambda+\mu} + \frac{\mu}{\lambda+\mu}\ \tau(k,j) + \frac{\lambda}{\lambda+\mu} \ \tau(k+2,j)\right] + \tau(k,j) \\
    & \quad = \; \frac{\mu-\lambda}{\mu + \lambda} \ [\tau(k+2,j)-\tau(k,j)] - \frac{2}{\lambda+\mu} \\
    & \quad \leq \; \frac{\mu-\lambda}{\mu + \lambda} \ \frac{2}{\mu-\lambda} - \frac{2}{\lambda+\mu} \; = \; 0 ,
\end{align*}
where the inequality follows by repeated application of (\ref{inequality:TauUpperbound}).
\end{proof}

With the above result we can prove monotonicity in $i$.

\begin{theorem}\label{theorem:BusyI--}
For all $i \geq 2$, $j \geq 0$, if $f^*(i,j) = B$, then $f^*(i-1,j) = B$.
\end{theorem}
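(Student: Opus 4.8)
The plan is to run a minimal‑counterexample argument that leverages the staircase structure already provided by Theorems~\ref{theorem:BusyForSure}, \ref{theorem:BusyJ++} and \ref{theorem:Concavity}. Suppose the claim fails somewhere. I would pick $i\ge 2$ to be the \emph{smallest} level for which some column $j$ has $f^*(i,j)=B$ but $f^*(i-1,j)=I$, and among all such columns the \emph{largest} one, $j$; since $f^*(i-1,j)=I$ forces $i-1>j$ by Theorem~\ref{theorem:BusyForSure}, only finitely many columns (those with $j\le i-2$) qualify, so both choices are legitimate. By minimality of $i$, the assertion already holds at every level $i'$ with $2\le i'\le i-1$, in every column; hence in column $j$ the set $\{k\le i-1:f^*(k,j)=B\}$ is downward closed, i.e.\ an initial segment $\{1,\dots,r\}$ with $j\le r\le i-2$ (it contains $\{1,\dots,j\}$ by Theorem~\ref{theorem:BusyForSure} and misses $i-1$). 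Put $b=r+1$, so that $f^*(b,j)=I$ and, when $b\ge 2$, $f^*(b-1,j)=B$.

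The crucial step is to show column $j+1$ is entirely ``$B$'' up to level $i$. From $f^*(i,j)=B$ and Theorem~\ref{theorem:BusyJ++} we get $f^*(i,j+1)=B$. Were $f^*(i-1,j+1)=I$, the pair $(i,j+1)$ would be a counterexample at the same minimal level $i$ but in a strictly larger column, contradicting maximality of $j$ (and the subcase $j=i-2$ is excluded outright, since $f^*(i-1,i-1)=B$). Hence $f^*(i-1,j+1)=B$, and iterating the already‑established statement at levels below $i$ down column $j+1$ yields $f^*(k,j+1)=B$ for all $1\le k\le i$. Theorem~\ref{theorem:Concavity} then applies to column $j+1$, so $\tau(k,j+1)$ is concave for $1\le k\le i$; the concavity computation in that proof is valid also at the boundary point $k=0$ (using $\tau(0,j+1)=0$, $f^*(1,j+1)=B$, and \eqref{eq:mon2b}), so $\tau(\cdot,j+1)$ is in fact concave on $\{0,1,\dots,i\}$.

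The contradiction is then read off from the two ``boundary'' states. Subtracting the defining inequalities of $f^*$ at $(b,j)$ and $(b-1,j)$ gives $d\big(\tau(b,j+1)-\tau(b-1,j+1)\big)\le c/\mu$ (when $b=1$, necessarily $j=0$ and this is just $d\tau(1,1)\le c/\mu$ with $\tau(0,1)=0$), while the same subtraction at $(i,j)$ and $(i-1,j)$ gives $d\big(\tau(i,j+1)-\tau(i-1,j+1)\big)>c/\mu$. Since $b\le i-1$, concavity of $\tau(\cdot,j+1)$ forces $\tau(i,j+1)-\tau(i-1,j+1)\le\tau(b,j+1)-\tau(b-1,j+1)$, and chaining the three displays yields $c/\mu<c/\mu$, a contradiction; hence no counterexample exists. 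The main obstacle is the middle paragraph: to invoke Theorem~\ref{theorem:Concavity} one needs the \emph{whole} column $j+1$ to be $B$ up to level $i$, and the missing fact $f^*(i-1,j+1)=B$ is obtained only by having chosen $j$ maximal (minimality of $i$ alone does not suffice); the remaining pieces are bookkeeping, together with the minor $b=1$ boundary case.
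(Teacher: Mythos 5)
Your argument is correct. It rests on the same pivot as the paper's proof --- Theorem~\ref{theorem:Concavity} applied to the column $j+1$ on which the routing decision in column $j$ depends, which first requires showing that column $j+1$ is entirely $B$ up to level $i$ --- but the scaffolding around that pivot is organized differently. The paper fixes $i$ and runs a downward induction on $j$, proving the stronger statement that $f^*(i,j)=B$ forces $f^*(k,j)=B$ for \emph{all} $1\le k\le i$; the all-$B$ property of the adjacent column is then just the induction hypothesis, and concavity is exploited by writing $ck/\mu$ as a convex combination of $c/\mu$ and $ci/\mu$ and interpolating between the inequalities at $(1,j-1)$ and $(i,j-1)$. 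You instead take a doubly extremal counterexample ($i$ minimal, then $j$ maximal), obtain the all-$B$ adjacent column from maximality of $j$ together with Theorem~\ref{theorem:BusyJ++} and the minimality of $i$, and exploit concavity through the equivalent statement that the first differences $\tau(k,j+1)-\tau(k-1,j+1)$ are non-increasing, comparing the slope at the break point $b$ of column $j$ with the slope at $i$; the resulting chain $c/\mu< d\bigl(\tau(i,j+1)-\tau(i-1,j+1)\bigr)\le d\bigl(\tau(b,j+1)-\tau(b-1,j+1)\bigr)\le c/\mu$ is airtight. The one place the proofs genuinely diverge is the boundary case $j=0$ with $f^*(1,0)=I$ (your $b=1$): the paper rules it out directly, showing $f^*(i,0)=B$ forces $f^*(1,0)=B$ via the bound $d/(\mu-\lambda)>c/\mu$, whereas you absorb it by extending the concavity inequality to $k=0$ using $\tau(0,j+1)=0$, $f^*(1,j+1)=B$ and \eqref{eq:mon2b} --- a step that is indeed licensed by the same computation as in the proof of Theorem~\ref{theorem:Concavity}. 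Both treatments are sound; yours trades the paper's separate $j=1$ computation for the extremal-choice bookkeeping, and correctly identifies $f^*(i-1,j+1)=B$ as the step that needs the maximality of $j$ rather than the minimality of $i$.
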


\begin{proof}
Fix $i \ge 2$. By downward induction we will prove for $j \ge 0$ that $f^* (i,j) = B$ implies $f^*(k,j) = B$ for all $1 \le k \le i$. By Theorem \ref{theorem:BusyForSure}
this is true for $j \ge i$. Now we assume that it holds for $j$ and then show that it also holds for $j-1$. Suppose $f^*(i,j-1) = B$. If $j > 1$, then $f^* (1,j-1) = B$ by Theorem \ref{theorem:BusyForSure}. To show that this is also valid for $j=1$, first note that $f^*(i,0) = B$ implies
\[
\frac{c i}{\mu} < d \tau(i,1) ,
\]
and thus, by using $\tau(i,1) \le i / (\mu - \lambda)$,
\[
\frac{d}{\mu - \lambda} > \frac{c }{\mu} .
\]
Hence,
\[
d \tau(1,1) = d \left[ \frac{1}{\lambda+\mu} + \frac{\lambda}{\lambda+\mu} \ \tau(2,1) \right] \ge d \left[ \frac{1}{\lambda+\mu} + \frac{\lambda}{\lambda+\mu} \ \frac{2}{\mu-\lambda} \right] = \frac{d}{\mu - \lambda} > \frac{c}{\mu},
\]
so $f^*(1,0) = B$. Since $f^*(1,j-1) = f^*(i,j-1) = B$, we have
\[
\frac{c}{\mu} < d \tau(1,j) + \frac{c(j-1)}{\mu}, \quad \frac{ci}{\mu} < d \tau(i,j) + \frac{c(j-1)}{\mu},
\]
and thus for $1 \le k \le i$,
\begin{eqnarray*}
  \frac{ck}{\mu} &=& \frac{i-k}{i-1} \ \frac{c}{\mu} + \frac{k-1}{i-1} \ \frac{ci}{\mu} \\
   & <   &  d \left[ \frac{i-k}{i-1} \ \tau(1,j) + \frac{k-1}{i-1} \ \tau(i,j) \right] + \frac{c(j-1)}{\mu} \\
   & \le &  d \tau(k,j) + \frac{c(j-1)}{\mu},
\end{eqnarray*}
where the second inequality follows from Theorem \ref{theorem:Concavity}. Hence $f^*(k,j-1) = B$.
\end{proof}

Theorems \ref{theorem:BusyJ++} and \ref{theorem:BusyI--} imply that the individually optimal policy is described by a switching curve $h(\cdot)$ such that it is optimal for a customer to join the busy queue in state $(i,j)$ is $j >  h(i)$, and that $h$ is a non-decreasing function of $i$. Note that $h$ depends on the costs $c$ and $d$. This completes the discussion of the individually optimal policy.

\subsection{Socially Optimal Policy}

Finally, suppose there is a
central controller who can route the customers so as to minimize the
long run expected waiting cost per unit time.
Let $Z(t)$ be the total number of customers in the system (those in the busy queue plus those in the idle queue) at time $t$. We begin with an easy but important observation.

\begin{lemma}
$\{Z(t), t \ge 0\}$ is the queue length process in an $M/M/1$ queue regardless of the routing policy followed.
\end{lemma}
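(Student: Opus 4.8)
The plan is to show that, whatever the routing rule, the process $Z(t)$ has exactly the transition dynamics of an $M/M/1$ queue with arrival rate $\lambda$ and service rate $\mu$, and then to invoke uniqueness of the law of such a process. I would describe the full state of the system as $(X_1(t),X_2(t),S(t))$, where $X_i(t)$ is the number of customers in queue $i$ and $S(t)\in\{1,2\}$ is the server position, so that $Z(t)=X_1(t)+X_2(t)$. On the arrival side the argument is immediate: the Poisson input of rate $\lambda$ increases $Z$ by one at rate $\lambda$, and the routing policy only decides which of $X_1,X_2$ is incremented, so the effect on $Z$ is identical for every policy and every full state.

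The departure side is where the polling structure enters. Because service is exhaustive and switch-over times are zero, the server is never idle while there is work in the system: as soon as the queue in service empties, the server instantaneously moves to the other queue if that one is nonempty, and it becomes idle only when $Z(t)=0$. Hence, by the common service rate $\mu$ at both queues, a service completion occurs at rate $\mu$ whenever $Z(t)>0$ and at rate $0$ when $Z(t)=0$, irrespective of which queue is in service and irrespective of the routing policy. In particular the instantaneous up- and down-rates of $Z(t)$ depend on the remaining state components only through $Z(t)$ itself, and they coincide with the $M/M/1$ rates.

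To make this rigorous for an arbitrary (possibly randomized, history-dependent) routing policy, I would give a pathwise coupling. Generate a Poisson arrival process $N_\lambda$ of rate $\lambda$ and an independent Poisson ``service-attempt'' process $N_\mu$ of rate $\mu$; drive the two-queue system by feeding the arrivals of $N_\lambda$ (routed according to the policy) and by declaring a service completion at each epoch of $N_\mu$ at which $Z>0$. By memorylessness of the exponential service times the residual service requirement of the customer in service is $\mathrm{Exp}(\mu)$ at every such epoch, and this remains true across the server's switches between queues, so the construction is a faithful realisation of the two-queue polling system under the given policy; simultaneously the pair $(N_\lambda,N_\mu)$ realises $Z(t)$ as the queue-length process of an $M/M/1$ queue, in a way that does not reference the policy at all.

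I expect the essentially trivial bookkeeping of the arrival side to cause no difficulty; the one step that needs care is this last coupling step, namely verifying that a single rate-$\mu$ clock faithfully represents the server's work despite the queue switches and that no residual-service information is lost in the process. This is precisely where the assumptions of exhaustive service, zero switch-over times, and a common service rate $\mu$ are all used; once it is established, the lemma follows.
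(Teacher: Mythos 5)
Your proposal is correct and rests on exactly the same observations as the paper's (much terser) proof: Poisson arrivals at rate $\lambda$ regardless of routing, exponential service at common rate $\mu$, and work conservation of the exhaustive, zero-switch-over discipline. The pathwise coupling you add is just a more explicit rigorization of the same argument.
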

\begin{proof}
The total arrival process to the system is a Poisson process with rate $\lambda$, the service times are independent and exponential with rate $\mu$, and the polling service discipline is work conserving. Hence the lemma follows.
\end{proof}

Now let $X(t)$ be the number of customers in the busy queue and $Y(t)$ be the number in the idle queue at time $t$. Then the total cost $C(t)$ over $(0,t]$ is given by
\[ C(t) = \int_0^t (c X(u) + d Y(u))\text{d}u, \;\;\; t \ge 0.\]
The process $\{C(t), t \ge 0\}$  does depend on the routing policy. Let $T_n$ be the $n$th time when the system busy cycle ends, i.e., when $Z(t)$ reaches $0$. Let
 \[ C_n = \int_{T_n}^{T_{n+1}} (c X(u) + d Y(u))du\]
be the total cost incurred over the interval $(T_n, T_{n+1}]$. An important implication of the above lemma is that $\{C(t), t \ge 0\}$ is a (delayed in case $Z(0)>0$) renewal reward process, since $\{(C_n, T_{n+1}-T_n), n \ge 1\}$ is a sequence of independent and identically distributed bi-variate random variables. Furthermore, $\{T_{n+1}-T_n, n \ge 1\}$ is a sequence of independent busy cycles in an $M/M/1$ queue. Hence, their common distribution does not depend on the routing policy, and
\[ E(T_{n+1}-T_n) = \frac{\mu}{\lambda(\mu - \lambda)} < \infty.\]
Then, from the results on renewal reward processes, we obtain that
\[ \lim_{t \rightarrow \infty} \frac{C(t)}{t} = \frac{\lambda(\mu-\lambda)}{\mu} E\left(\int_0^{T_1} C(u) \text{d}u \; | \; Z(0)=1\right).\]
Also, we have the following bound:
\[ \lim_{t \rightarrow \infty} \frac{C(t)}{t} \le \max(c,d)\lim_{t \rightarrow \infty} E(Z(t)) = \max(c,d)\frac{\lambda}{\mu - \lambda} < \infty.\]
Thus the long run average cost exists and is finite, and it is proportional to the total cost in first busy cycle started with one customer in the system. Thus the problem of finding an average cost optimal policy reduces to the problem of finding an optimal policy that minimizes the total expected cost $C$ over a busy period $T$ starting in state (1,0). This can be written as
\[
C = E\left(\int_0^T (cX(t) + dY(t))dt \right)= d E\left(\int_0^T Z(t)dt\right) + (c-d)E\left(\int_0^T X(t) dt\right).
\]
Clearly the first term is independent of the routing policy followed. Thus, to minimize $C$, we need to minimize the integral $\int X(t)dt$ if  $ c > d$ and maximize it if $c  < d$. If $c = d$, any policy is optimal. Clearly, when $ c \le d$, it is optimal to send all traffic to the busy queue. The interesting case arises when $c > d$. Hence we deal with that case below.

The above discussion implies that, without loss of generality, we can assume $c=1$, and $d=0$. Note that this is in stark contrast with the individually optimal policies that depend on both $c$ and $d$. We can now formulate the cost minimization as a standard negative dynamic programming problem, see e.g. Ross~\cite{R83}. Below we make the details precise.

Let $v(i,j)$ be the minimum expected total cost starting in state $(X(0),Y(0))=(i,j)$ over the time interval $[0,T)$ where
\[ T = \min\{t \ge 0: Z(t) = 0 \; | \; Z(0)=i+j\}.\]
Without loss of generality we can assume that $\lambda + \mu =1$. From Ross~\cite{R83} it follows that $v$ satisfies the optimality equations:
\begin{eqnarray*}
    v(i,j) & = & i  + \mu v(i-1,j) + \lambda \min (v(i+1,j),v(i,j+1)), \quad i \geq 2, ~j \ge 0, \\
    v(1,j) & = & 1 + \mu v(j,0) + \lambda \min (v(2,j),v(1,j+1)), \quad j \ge 0,
\end{eqnarray*}
where $v(0,0) = 0$.
We are interested in the solution to the above equations, which can be obtained by the following value iteration
for $n \ge0$:
\begin{eqnarray*}
v_{n+1}(i,j) &=& i  + \mu v_{n}(i-1,j) + \lambda \min (v_{n}(i+1,j),v_{n}(i,j+1)) \quad i \geq 2, ~j \geq 0, \\
    v_{n+1}(1,j) &=& 1 + \mu v_{n}(j,0) + \lambda \min (v_{n}(2,j),v_{n}(1,j+1)) \quad j \geq 0,
\end{eqnarray*}
with initially $v_0 (i,j) = 0$ for all $i \ge 1, j \ge 0$ and $v_n (0,0) = 0$ for all $n \ge 0$.

Note that the $v_n$ in the above iteration is guaranteed to converge to $v$ as $n \rightarrow \infty$, even though the costs are unbounded. Once $v$ is computed, the theory of negative dynamic programming says that the  optimal policy in state $(i,j)$ is to route an incoming customer to the busy queue if $v(i+1,j) < v(i, j+1)$, and to the idle queue otherwise.

Unfortunately, we have been unable to formally derive any structural results for socially optimal policy, the main stumbling block being the term $v_n(j,0)$ on the right hand side of the equation for $v_{n+1}(1,j)$. However, based on extensive numerical experimentation we have seen that a switching curve policy is optimal. That is, for each $i \ge 1$, there is a critical number $g(i)$ such that the optimal policy in state $(i,j)$ is to route the incoming customer to the busy queue if $j > g(i)$ and to the idle queue otherwise. Furthermore, in a fairly large parameter space, the switching curve can be approximated by the linear function:
\begin{equation} \label{eq:con}
 g(i) = \alpha i, \;\;\; i \ge 0.
 \end{equation}
where
\begin{equation} \label{eq:alp2}
\alpha =  \frac{2\rho}{-1+\rho+\sqrt{(1-\rho)(1+3\rho)}}.
\end{equation}
It is easy to see that $\alpha > 1$. We have also observed numerically that $h(i) \le g(i)$ for all $i \ge 1$, where $h$ is the switching curve for the individually optimal policy. That is, more customers join the busy queue under the individually optimal policy than under the socially optimal policy. We shall illustrate these comments with a numerical example in Section~\ref{sec:num}.

In the next section we shall develop a fluid model of this scenario and derive an optimal routing policy.

\section{Fluid Model}
\label{sec:fluid}

Consider a fluid equivalent of the polling system described in Section~\ref{sec:mod}. Customers arrive as a fluid with deterministic rate $\lambda$ per unit time and can be routed to the busy queue or the idle queue. The cost structure remains the same.  Once the server empties a queue, he switches to the other queue and continues to empty it. The fluid is removed at a deterministic rate $\mu > \lambda$ as long as there is fluid to be removed. Once the system becomes empty, the fluid is removed at rate $\lambda$, and the system stays empty forever. As before, the queue that is being served is called the busy queue, and the other one is called the idle queue.

Let $x(t)$ be the amount of fluid in the busy queue, and $y(t)$ the amount of fluid in the idle queue, at time $t$. Suppose the initial state is $x(0) = x_0 \ge 0$ and $y(0) = y_0 \ge 0$. Let $z(t) = x(t) + y(t)$ be the total fluid in the system at time $t$. Then $z(0)=x(0)+y(0)$ and regardless of the routing policy followed, $z(t)$ decreases at rate $\lambda - \mu< 0$, until it hits zero at time
\[ T = \frac{x_0+y_0}{\mu - \lambda},\]
and then $z(t)$ remains 0 for $t \ge T$. As in the previous section, the total cost incurred can be written as
\[ \int_0^T (c x(t) + d y(t)) \mbox{d}t = d\int_0^T z(t) dt + (c-d)\int_0^T x(t)\mbox{d}t.\]
We want to determine the optimal routing policy for the incoming fluid so as to minimize this cost. Since the routing policy does not affect the trajectory of $z$, the optimal policy  needs to minimize the second integral if $c > d$, and maximize it if $c < d$.

The optimal routing policy for $t \ge T$ is obvious:  keep sending the incoming fluid to the busy queue, and  both the queues will remain empty forever. Thus we  concentrate on the optimal policy for $0 \le t \le T$. If $c < d$, the optimal policy is to route all traffic to the busy queue. If $c = d$ all routing policies are optimal. Hence we further concentrate on the case $c > d$ in the rest of this section.

We assume the server has just switched to queue 1 at time zero and the system is non-empty. Thus $x_0 > 0$ and $y_0 = 0$, and  queue 1 is the busy queue at time zero. Since the system is entirely deterministic, the routing policy is completely described by the instantaneous rate at which the incoming fluid is routed to the two queues as a function of time. Let $r(t)$ be the rate at which fluid is routed to the idle queue at time $t$, $t \ge 0$. Then $\lambda - r(t)$ is the rate at which fluid is routed to the busy queue. Now let $t(0)=0$ and $t_k$ be the time at which the server switches from one queue to the other, called the $k$th switching time. These times are completely determined by the function $\{r(t), t \ge 0\}$ as follows:
\[ t_1 = \min\{t \ge 0: x(t) = x_0 +\int_0^t (\lambda - r(u))\mbox{d}u - \mu t = 0\}.\]
Thus queue 2 becomes the busy queue at time $t_1$ and now has
\[ x_1 = x(t_1)= \int_0^{t_1} r(u) \mbox{d}u\]
amount of fluid in it. Queue 1 becomes the idle queue and has no fluid in it. Thus we can recursively obtain, for $k \ge 1$:
\begin{equation} \label{eq:tk}
 t_{k+1} = \min\{t \ge t_k: x(t) = x_k +\int_{t_k}^t (\lambda - r(u))\mbox{d}u - \mu (t-t_k) = 0\},
 \end{equation}
\begin{equation} \label{eq:xk}
 x_{k+1} = x(t_{k+1})=  \int_{t_k}^{t_{k+1}} r(u) du.
 \end{equation}
We call $[t_k, t_{k+1})$ the $k$th cycle. Note that $x_{k+1}$ also represents the total amount of fluid routed to the idle queue during the $k$th cycle. The next theorem gives an important preliminary  result on the optimal policy.
\begin{theorem} \label{th:pre}
Let $\{r(t), t \ge 0\}$ be a given routing policy where $r(t)$ is the instantaneous rate at which incoming fluid is routed to the idle queue at time $t$. Let $\{t_k, k \ge 0\}$ and $\{x_k, k \ge 0\}$ be as given in \eqref{eq:tk} and \eqref{eq:xk}. Let
\[ v_k = \frac{x_{k+1}}{\lambda}, \;\;\; k \ge 0\]
and define a new routing policy $\{s(t), t \ge 0\}$ as follows:
\[ s(t) = \left\{ \begin{array}{ll}
\lambda & \;\;\; \mbox{for} \; t_k \le t \le t_k + v_k,\\
0 & \;\;\; \mbox{for} \; t_k + v_k \le t < t_{k+1}.
\end{array} \right.
\]
Then the total cost incurred by routing policy $\{s(t), t \ge 0\}$ is no greater than that of $\{r(t), t \ge 0\}$.
\end{theorem}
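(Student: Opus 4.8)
The plan is to exploit the fact that, under any routing policy, the per-cycle data $\{t_k\}$ and $\{x_k\}$ determine one another through a simple flow-balance identity, so that the rearranged policy $\{s(t)\}$ — which by construction routes exactly $x_{k+1}$ units of fluid to the idle queue during the $k$th cycle — produces the \emph{same} switching times and the \emph{same} sequence $\{x_k\}$ as $\{r(t)\}$; a cycle-by-cycle rearrangement inequality then shows $\{s(t)\}$ does at least as well. Throughout we are in the case $c>d$ fixed earlier in the section.

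First I would record the flow balance over a cycle. Because the busy queue is served at rate $\mu$ while it receives fluid at rate $\lambda-r(t)\le\lambda<\mu$, its content is strictly decreasing on $[t_k,t_{k+1})$, so $t_{k+1}$ is the \emph{unique} time in that cycle at which the busy queue empties; integrating the net rate over the cycle and using $x(t_{k+1})=0$ together with \eqref{eq:tk}--\eqref{eq:xk} gives
\[
t_{k+1}-t_k=\frac{x_k-x_{k+1}}{\mu-\lambda},\qquad k\ge 0 .
\]
Since $0\le r\le\lambda$ we also get $x_{k+1}=\int_{t_k}^{t_{k+1}}r(u)\,du\le\lambda(t_{k+1}-t_k)$, i.e. $v_k=x_{k+1}/\lambda\le t_{k+1}-t_k$, so the full-rate interval $[t_k,t_k+v_k]$ fits inside the $k$th cycle and $\{s(t)\}$ is feasible. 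I would then check by induction on $k$, starting from the common initial value $x_0$, that $\{s(t)\}$ reproduces the sequences $\{t_k\}$ and $\{x_k\}$ of $\{r(t)\}$: under $s$ the total fluid routed to the idle queue in cycle $k$ is $\lambda v_k=x_{k+1}$, so the displayed identity (now applied to $s$) leaves $t_{k+1}$ and the next $x$-value unchanged. In particular the two policies share the same busy period $[0,T]$, and the total-content process $z(t)$ is the same under both.

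Next I would reduce the cost comparison to a single integral: writing the cost as $d\int_0^T z(t)\,dt+(c-d)\int_0^T x(t)\,dt$ and using that $z$ is policy-independent and $c>d$, it suffices to prove $\int_0^T x^{s}(t)\,dt\le\int_0^T x^{r}(t)\,dt$, equivalently (since $x=z-y$) that $\int_0^T y^{s}(t)\,dt\ge\int_0^T y^{r}(t)\,dt$. This I would do cycle by cycle. During the $k$th cycle the idle queue is never served, so $y(t)=\int_{t_k}^t r(u)\,du$, whence by Fubini
\[
\int_{t_k}^{t_{k+1}} y(t)\,dt=\int_{t_k}^{t_{k+1}} r(u)\,(t_{k+1}-u)\,du .
\]
Among all measurable $r$ with $0\le r\le\lambda$ and $\int_{t_k}^{t_{k+1}}r=x_{k+1}$ fixed, the right-hand side is largest when the mass sits where the decreasing weight $t_{k+1}-u$ is largest, i.e. for the front-loaded profile equal to $\lambda$ on $[t_k,t_k+v_k]$ and $0$ afterwards — which is exactly $s$ on that cycle. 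This is a one-line rearrangement argument: $s-r\ge 0$ on $[t_k,t_k+v_k]$ and $\le 0$ on $[t_k+v_k,t_{k+1}]$ with $\int(s-r)=0$, while the weight dominates its value $t_{k+1}-t_k-v_k$ on the first interval and is dominated by it on the second, so $\int(s(u)-r(u))(t_{k+1}-u)\,du\ge 0$. Summing over the cycles (finitely many if some $x_{k+1}=0$; otherwise a convergent sum, since $x_{k+1}\le\rho x_k$ forces $\sum(t_{k+1}-t_k)=x_0/(\mu-\lambda)=T$) yields $\int_0^T y^{s}\ge\int_0^T y^{r}$, and hence the theorem.

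The main obstacle is the second step — confirming that the rearranged policy $s$ genuinely inherits the switching times $\{t_k\}$ and cycle inputs $\{x_k\}$ of $r$. The key realization is that $t_{k+1}$ depends on the policy only through $x_k$ and the aggregate $x_{k+1}=\int_{t_k}^{t_{k+1}}r$, both of which $s$ is designed to preserve, and that the constraint-feasibility bound $v_k\le t_{k+1}-t_k$ is precisely the consequence $x_{k+1}\le\lambda(t_{k+1}-t_k)$ of $r\le\lambda$. Once that is nailed down, the per-cycle comparison is a routine front-loading inequality.
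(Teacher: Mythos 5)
Your proposal is correct and follows essentially the same route as the paper: both arguments hinge on showing that the front-loaded policy $s$ reproduces the switching times $\{t_k\}$ and cycle inputs $\{x_k\}$ of $r$, and then compare costs cycle by cycle after splitting the cost into the policy-independent $d\int_0^T z(t)\,dt$ part and the $(c-d)\int_0^T x(t)\,dt$ part. The only cosmetic difference is the final step: the paper gets the pointwise domination $y^s(t)\ge y(t)$ directly (the same mass is routed at the maximal rate $\lambda$, so the idle queue is always at least as full), which is a little more economical than your Fubini/rearrangement argument for the integrated inequality $\int y^s \ge \int y$, though both are valid.
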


\begin{proof}
Let $x(t)$ and $y(t)$ be the fluid levels at time $t$ under the routing policy $\{r(t), t \ge 0\}$, and $x^s(t)$ and $y^s(t)$ be the fluid levels at time $t$ under the routing policy $\{s(t), t \ge 0\}$. First note that the amount of fluid routed to the idle queue under the $\{s(t), t \ge 0\}$ policy during the $k$th cycle is $\lambda v_k = x_{k+1}$, which is the same as under the  $\{r(t), t \ge 0\}$ policy. However, this fluid is routed at the fastest rate possible, namely $\lambda$. Hence
\[ y^s(t) \ge y(t), \;\;\; t_k \le t < t_{k+1}, \;\; k \ge 0.\]
Since $x(t) + y(t) = x^s(t) + y^s(t)$ for all $t$ (since the total fluid content is independent of the routing policy), it follows that
\[ x^s(t) \le x(t), \;\;\; t_k \le t < t_{k+1}, \;\; k \ge 0.\]
In fact, we have
\[ x^s(t) = \left\{ \begin{array}{ll}
x_k - \mu t & \;\;\; \mbox{for} \; t_k \le t \le t_k + v_k,\\
x_k - \mu v_k - (\mu - \lambda)(t-v_k) & \;\;\; \mbox{for} \; t_k + v_k \le t < t_{k+1}.
\end{array} \right.
\]
Thus $x^s(t) > 0$ for $t_k \le t < t_{k+1}$, and $\{x^s(t), t \ge t_k\}$ reaches zero for the first time at time $t_{k+1}$. Thus the switching times  under  routing policy $s$ are the same as under  policy $r$, and $x^s_k = x_k$ for all $k \ge 0$. Thus we have
\begin{eqnarray*}
 \int_0^{t_1} (cx^s(t) + dy^s(t))dt  & = &  \int_0^{t_1} (cx^s(t) + d(z(t) - x^s(t)))dt\\
  & = &  d\int_0^{t_1} z(t) dt +  (c - d) \int_0^{t_1}x^s(t)dt\\
 &\le &  d\int_0^{t_1} z(t) dt +  (c - d) \int_0^{t_1}x(t)dt\\
 & = &   \int_0^{t_1} (cx(t) + dy(t))dt.
 \end{eqnarray*}
Thus the cost under $s$ is no greater than that under $r$ over the first cycle. Since the state of the polling system under both  policies is the same at time $t_1$, the above argument can be repeated to show that  policy $s$ performs at least as well as policy $r$ over every cycle, and hence for all $0 \le t \le T$.
\end{proof}

The above theorem implies that a policy is equally well characterized by the switching times $\{t_k, k \ge 0\}$ it induces (with $t_0=0$), and among all the policies with these switching times, a policy that sends all the traffic to the idle queue first as long as possible in each cycle, is optimal. Thus all that remains to be done is to identify the optimal switching times.

%
%

Since the system is deterministic, determining optimal $\{t_k, k \ge 0\}$ is equivalent to determining the optimal fluid levels $\{x_k, k \ge 1\}$, with a given initial level $x_0$. The next theorem shows that this can be modeled and solved as a Linear Quadratic Regulator (LQR) problem, see Bryson and Ho~\cite{BH75}, and Bertsekas~\cite{B05}.

\begin{theorem}\label{th:lqr}
The optimal $\{x_k, k \ge 1\}$ are obtained by solving the following infinite horizon constrained LQR:
\begin{subequations}
\label{eq:lqrq2}
\begin{align}
\min_{u_k} & \sum_{k=0}^\infty (1-\rho)x_k^2+\rho u_k^2
\intertext{subject to}
x_{k+1}&=\rho (x_k - u_k) ,\\
0&\leq u_k\leq x_k .
\label{eq:constraint}
\end{align}
\end{subequations}
\end{theorem}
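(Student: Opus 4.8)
The plan is to reduce the continuous-time fluid control problem to the discrete optimization over the sequence of busy-queue fluid levels $\{x_k\}$ recorded at the switching instants. By Theorem~\ref{th:pre} we may restrict attention to policies that, in each cycle, route all incoming fluid to the idle queue first (at rate $\lambda$ for a time $v_k$) and then to the busy queue; such a policy is completely determined by the numbers $v_k$, equivalently by $x_{k+1}=\lambda v_k$, the amount of fluid diverted to the idle queue during cycle $k$. So the first step is to write down, for a generic such policy, the within-cycle trajectory of $x$: on $[t_k,t_k+v_k]$ the busy queue drains at rate $\mu$, so $x^s(t)=x_k-\mu(t-t_k)$; on $[t_k+v_k,t_{k+1}]$ it drains at the net rate $\mu-\lambda$, starting from $A_k:=x_k-\mu v_k$ and hitting $0$ exactly at $t_{k+1}$. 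A fluid balance over the cycle gives the cycle length $t_{k+1}-t_k=(x_k-x_{k+1})/(\mu-\lambda)$, and the requirement that the busy queue not empty before $t_k+v_k$, namely $v_k\le x_k/\mu$, is precisely the box constraint $0\le x_{k+1}\le\rho x_k$; conversely every sequence satisfying $0\le x_{k+1}\le\rho x_k$ is realized by such a policy. Since $x_k\le\rho^k x_0\to0$, the cycles exactly exhaust $[0,T)$ and every sum that appears below converges geometrically, so there are no issues of well-posedness.

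Next I would compute the cost of one cycle, $J_k:=\int_{t_k}^{t_{k+1}}x^s(t)\,\mathrm{d}t$, by integrating over the two phases:
\[
J_k=\int_0^{v_k}(x_k-\mu\sigma)\,\mathrm{d}\sigma+\int_0^{A_k/(\mu-\lambda)}\bigl(A_k-(\mu-\lambda)w\bigr)\,\mathrm{d}w=x_kv_k-\tfrac12\mu v_k^2+\frac{A_k^2}{2(\mu-\lambda)} .
\]
Substituting $v_k=x_{k+1}/\lambda$ and $A_k=x_k-\mu x_{k+1}/\lambda$ and simplifying gives
\[
J_k=\frac{1}{2(\mu-\lambda)}\Bigl(x_k^2-2x_kx_{k+1}+\tfrac{\mu}{\lambda}x_{k+1}^2\Bigr).
\]
I would then change variables to $u_k:=x_k-x_{k+1}/\rho$, equivalently $x_{k+1}=\rho(x_k-u_k)$, under which the feasible set $0\le x_{k+1}\le\rho x_k$ becomes exactly $0\le u_k\le x_k$ and a short computation (using $\mu/\lambda=1/\rho$) yields the identity $x_k^2-2x_kx_{k+1}+\tfrac{\mu}{\lambda}x_{k+1}^2=(1-\rho)x_k^2+\rho u_k^2$, hence $J_k=\frac{1}{2(\mu-\lambda)}\bigl((1-\rho)x_k^2+\rho u_k^2\bigr)$.

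Finally, summing over cycles and recalling that for $c>d$ the total cost equals a policy-independent term plus $(c-d)\int_0^Tx(t)\,\mathrm{d}t=(c-d)\sum_{k\ge0}J_k$, the optimal policy is the one minimizing $\frac{c-d}{2(\mu-\lambda)}\sum_{k=0}^\infty\bigl((1-\rho)x_k^2+\rho u_k^2\bigr)$ over all feasible $\{u_k\}$ subject to $x_{k+1}=\rho(x_k-u_k)$ and $0\le u_k\le x_k$; since the leading constant is positive it may be dropped, giving exactly \eqref{eq:lqrq2}.

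The conceptual work has already been done in Theorem~\ref{th:pre}, which removes the within-cycle degrees of freedom; what remains — and where care is needed — is purely bookkeeping: getting the two-phase trajectory and the cycle length right, checking that the phase-one non-overshoot condition coincides with the box constraint, and carrying out the simplification of $J_k$ together with the change of variables without sign or factor errors. I do not expect any genuine obstacle beyond this.
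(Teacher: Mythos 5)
Your proposal is correct and follows essentially the same route as the paper: restrict to the per-cycle diversion policies of Theorem~\ref{th:pre}, compute the cost of each cycle as a quadratic in $(x_k,v_k)$ with $x_{k+1}=\lambda v_k$ and $0\le\mu v_k\le x_k$, and change variables to $u_k=x_k-\mu v_k$. The only cosmetic difference is that you strip off the policy-independent term $d\int_0^T z(t)\,\mathrm{d}t$ up front (using the decomposition already noted at the start of Section~\ref{sec:fluid}) and integrate only $x^s(t)$, whereas the paper writes the full cycle cost and subtracts the same quantity afterwards via the telescoping identity $\sum_k(x_k^2-\lambda^2v_k^2)=x_0^2$.
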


\begin{proof}
Let $\{t_k, k \ge 0\}$ be the switching times of the optimal policy. From Theorem~\ref{th:pre} we see that there exist $\{v_k, k \ge 0\}$ such that over the $k$th cycle $[t_k, t_{k+1})$ it is optimal to route all  fluid to
the idle queue over $[t_k, t_k + v_k)$ and then route all  fluid to the busy queue over $[t_k +  v_k,t_{k+1})$.

Thus, during the interval $[t_k,t_k + v_k)$, the fluid level in the busy queue decreases at rate $\mu$ from $x_k$ to $x_k-\mu v_k$, after which it decreases at rate $\mu - \lambda$ from $x_k-\mu v_k$ to $0$.
For the idle queue, the fluid level increases at rate $\lambda$ from $0$ to $\lambda v_k$ over $[t_k,t_k + v_k)$, and then stays constant over $[t_k + v_k, t_{k+1})$. A little algebra shows that
$$ t_{k+1}-v_k-t_k = \frac{x_k-\mu v_k}{\mu - \lambda}.$$
Therefore, the cost during the $k$th  cycle is:
\begin{equation}
\label{eq:cost2q}
c\left(\frac{2x_k-\mu v_k}{2}v_k+\frac{(x_k-\mu v_k)^2}{2(\mu-\lambda)}\right)+d\left(\frac{\lambda v_k^2}{2} +\frac{\lambda v_k(x_k-\mu v_k)}{\mu -\lambda}\right) .
\end{equation}
Furthermore, we obtain the following dynamics:
\begin{align*}
x_{k+1}&=\lambda v_k, &
\intertext{subject to the constraint}
0&\leq \mu v_k \leq x_k.
\end{align*}
Note that using the dynamics we have
\begin{equation}
\label{eq:zeroq2}
\sum_{k=0}^\infty (x_k^2-\lambda^2v_k^2)=
\sum_{k=0}^\infty (x_k^2-x_{k+1}^2)=x_0^2 .
\end{equation}
So subtracting $\frac{d}{2(\mu -\lambda)}$ times the left hand side of \eqref{eq:zeroq2} (which is like subtracting a constant) from the sum of \eqref{eq:cost2q} over all $k$, we need to solve the following problem:
\begin{align*}
\min_{v_k}\quad & \frac{c-d}{2(\mu-\lambda)}\sum_{k=0}^\infty (x_k^2-2\lambda x_kv_k+\lambda \mu v_k^2)
\intertext{subject to}
x_{k+1}&=\lambda v_k ,\\
0&\leq \mu v_k\leq x_k .
\end{align*}
To eliminate the product $\lambda x_k v_k$ we define the new variable
$$
u_k=x_k- \mu v_k ,
$$
which allows us to rewrite our problem as \eqref{eq:lqrq2}.
\end{proof}

The next theorem presents the solution to the above problem.
\begin{theorem}
The optimal solution to the constrained LQR in Theorem~\ref{th:lqr} is given by
\begin{equation}
\label{eq:optcon}
u_k=\frac{-(1-\rho)+\sqrt{(1-\rho)(1+3\rho)}}{1+\rho+\sqrt{(1-\rho)(1+3\rho)}}x_k, \;\;\;k \ge 0
\end{equation}
and
\begin{equation} \label{eq:xkp1}
x_{k+1} = \beta x_k, \;\;\; k \ge 0,
\end{equation}
where
\begin{equation} \label{eq:bet}
\beta = \frac{2\rho}{1+\rho+\sqrt{(1-\rho)(1+3\rho)}}.
\end{equation}
\end{theorem}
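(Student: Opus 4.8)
The plan is to recognize \eqref{eq:lqrq2} as a standard scalar discrete‑time linear--quadratic regulator with dynamics $x_{k+1}=ax_k+bu_k$, $a=\rho$, $b=-\rho$, stage cost $Qx_k^2+Ru_k^2$ with $Q=1-\rho$, $R=\rho$, plus the box constraint $0\le u_k\le x_k$. First I would drop the constraint \eqref{eq:constraint}, solve the unconstrained infinite‑horizon LQR, and only afterwards check that the resulting feedback policy happens to satisfy $0\le u_k\le x_k$, so that it is automatically optimal for the constrained problem too. The cleanest route that simultaneously produces the policy and certifies its optimality is a completing‑the‑square identity: for a scalar $S\ge 0$ one has, along any trajectory $x_{k+1}=ax_k+bu_k$,
\[
Qx_k^2+Ru_k^2 \;=\; Sx_k^2-Sx_{k+1}^2+(R+b^2S)\,(u_k-\kappa x_k)^2 ,
\]
provided $S$ solves the algebraic Riccati equation $S=Q+a^2S-\tfrac{a^2b^2S^2}{R+b^2S}$ and $\kappa=-\tfrac{abS}{R+b^2S}$. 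I would verify this by expanding both sides; matching the $x_ku_k$ term gives $\kappa=\tfrac{\rho S}{1+\rho S}$, and matching the $x_k^2$ term collapses the Riccati equation to the quadratic $\rho S^2+(1-\rho)S-(1-\rho)=0$.

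The next step is pure algebra. Take the positive root $S=\bigl(-(1-\rho)+\sqrt{(1-\rho)(1+3\rho)}\bigr)/(2\rho)$ (the relevant one for a cost‑to‑go, and the stabilizing one), so that, writing $w:=\sqrt{(1-\rho)(1+3\rho)}$, one gets $\rho S=\bigl(-(1-\rho)+w\bigr)/2$ and $1+\rho S=(1+\rho+w)/2$. Substituting into $\kappa=\rho S/(1+\rho S)$ yields exactly the coefficient in \eqref{eq:optcon}, i.e.\ $u_k=\kappa x_k$, and $x_{k+1}=\rho(x_k-u_k)=\rho(1-\kappa)x_k=\bigl(\rho/(1+\rho S)\bigr)x_k$ gives $\beta=2\rho/(1+\rho+w)$, which is \eqref{eq:bet}.

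It then remains to (a) confirm feasibility of the candidate and (b) prove optimality. For (a): $\kappa\ge 0$ because $w\ge 1-\rho$ (equivalently $1+3\rho\ge 1-\rho$), and $\kappa\le 1$ because $-(1-\rho)\le 1+\rho$; moreover $0<\beta<1$, so from $x_0>0$ the trajectory $x_k=\beta^kx_0$ stays positive and decreases to $0$, whence $0\le u_k=\kappa x_k\le x_k$. For (b): any admissible sequence obeys $x_{k+1}=\rho(x_k-u_k)$ with $0\le u_k\le x_k$, hence $0\le x_{k+1}\le\rho x_k$ and $x_N\le\rho^Nx_0\to0$. Summing the completing‑the‑square identity over $k=0,\dots,N-1$ and letting $N\to\infty$ (the boundary term $Sx_N^2$ vanishes, and $R+b^2S=\rho(1+\rho S)>0$) gives
\[
\sum_{k=0}^\infty\bigl((1-\rho)x_k^2+\rho u_k^2\bigr)=Sx_0^2+\rho(1+\rho S)\sum_{k=0}^\infty(u_k-\kappa x_k)^2\;\ge\;Sx_0^2 ,
\]
with equality iff $u_k=\kappa x_k$ for all $k$; since that policy is admissible and attains $Sx_0^2$, it is the unique optimum.

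The only delicate point, and the place where the box constraint and the infinite horizon interact, is step (b): one must ensure the telescoped boundary term really vanishes and that the quadratic lower bound $Sx_0^2$ applies to the \emph{constrained} problem. Both are taken care of by the observation that admissibility ($u_k\ge0$) forces geometric decay $x_k\le\rho^kx_0$, so no transversality subtlety arises and the unconstrained LQR optimum is feasible, hence optimal. (Alternatively one could run finite‑horizon value iteration with quadratic value functions $v_n(x)=S_nx^2$, note that the stagewise minimizer $\kappa_nx$ lies in $[0,x]$ since $S_n\ge0$, show $S_n\uparrow S$ through the Riccati recursion, and pass to the limit; the completing‑the‑square argument above is shorter and additionally gives uniqueness.)
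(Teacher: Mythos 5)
Your proposal is correct and follows the same overall strategy as the paper: drop the constraint \eqref{eq:constraint}, solve the resulting standard scalar infinite-horizon LQR with $a=\rho$, $b=-\rho$, $q=1-\rho$, $r=\rho$, and then observe that the resulting feedback law satisfies $0\le u_k\le x_k$, so it is optimal for the constrained problem as well. The only difference is in how the unconstrained solution is justified: the paper simply cites the standard formula $f=(r+bpb)^{-1}bpa$ with $p$ the nonnegative root of the discrete algebraic Riccati equation, whereas you derive and certify it from scratch via the completing-the-square (telescoping) identity. Your version buys three things the paper leaves implicit: an explicit lower bound $Sx_0^2$ valid for every admissible sequence of the \emph{constrained} problem, a clean disposal of the boundary term (using $u_k\ge 0\Rightarrow x_N\le\rho^N x_0\to 0$, so no transversality condition needs to be invoked), and uniqueness of the optimizer. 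The algebra checks out: the Riccati equation collapses to $\rho S^2+(1-\rho)S-(1-\rho)=0$, and with $w=\sqrt{(1-\rho)(1+3\rho)}$ the positive root gives $\kappa=\rho S/(1+\rho S)=\bigl(-(1-\rho)+w\bigr)/(1+\rho+w)$ and $\beta=\rho/(1+\rho S)=2\rho/(1+\rho+w)$, matching \eqref{eq:optcon} and \eqref{eq:bet}.
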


\begin{proof}
Without constraint \eqref{eq:constraint}  problem
\eqref{eq:lqrq2} is the standard (infinite-horizon discrete-time) LQR problem. Initially we ignore  constraint \eqref{eq:constraint}. Then the
solution to the optimal control problem \eqref{eq:lqrq2} is given by
\begin{align*}
u_k&=-fx_k
\intertext{where}
f&=(r+bpb)^{-1}bpa
\intertext{and $p$ is the (unique) nonnegative solution of the discrete time algebraic Ricatti equation:}
p&=q+a(p-pb(r+bpb)^{-1}bp)a
\intertext{where}
a=\rho, & b=-\rho, \\
q=(1-\rho), & r=\rho.
\end{align*}
That is, the optimal solution is given by
\[u_k=\frac{-(1-\rho)+\sqrt{(1-\rho)^2+4\rho(1-\rho)}}{1+\rho+\sqrt{(1-\rho)(1+3\rho)}}x_k ,
\]
which can written as \ref{eq:optcon}.
Recall that we ignored constraint \eqref{eq:constraint}. However,
solution \eqref{eq:optcon} satisfies \eqref{eq:constraint}, so
\eqref{eq:optcon} is also the optimal solution for  problem \eqref{eq:lqrq2}
including  constraint \eqref{eq:constraint}. Using \ref{eq:optcon} in $x_{k+1}=\lambda v_k$ and $u_k=x_k- \mu v_k$ we get \eqref{eq:xkp1}.
\end{proof}

One can show that $0 \le \beta < 1$. Thus the amount of the fluid at switch-over times decreases geometrically to zero. The optimal policy goes through an infinite number of switch-overs before the system becomes empty. The next theorem specifies the optimal policy implied by the above theorem.

\begin{theorem} \label{th:optpol}
Let $\alpha$ be as in \eqref{eq:alp2}. It is optimal to route all incoming fluid to the idle queue at time $t$ if
\begin{equation} \label{eq:swc}
y(t) < \alpha x(t)
\end{equation}
and all incoming fluid to the busy queue otherwise.
\end{theorem}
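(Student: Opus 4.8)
The plan is to convert the open-loop description of the optimal policy -- obtained in Theorems~\ref{th:pre} and~\ref{th:lqr} together with the explicit solution of the LQR -- into the state-feedback form~\eqref{eq:swc}. Recall that under the optimal policy each cycle $[t_k,t_{k+1})$ splits into an interval $[t_k,t_k+v_k)$ on which all incoming fluid is routed to the idle queue, followed by $[t_k+v_k,t_{k+1})$ on which all of it is routed to the busy queue, where $\lambda v_k=x_{k+1}=\beta x_k$ with $\beta$ as in~\eqref{eq:bet}; moreover $u_k=x_k-\mu v_k=\frac{\rho-\beta}{\rho}\,x_k>0$, since $\beta<\rho$ (which amounts to $\sqrt{(1-\rho)(1+3\rho)}>1-\rho$, i.e.\ $4\rho>0$). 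On the first sub-interval the busy queue drains at rate $\mu$ while the idle queue fills at rate $\lambda$ from $y(t_k)=0$, so eliminating time yields the line segment $y=\rho\,(x_k-x)$ from $(x_k,0)$ to $(u_k,\beta x_k)$; on the second sub-interval $y$ stays at $\beta x_k$ while $x$ decreases at rate $\mu-\lambda$ from $u_k$ down to $0$, after which the queues swap labels and the cycle repeats starting from $(x_{k+1},0)$.

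The crucial point is that this trajectory meets the line $y=\alpha x$ exactly at the switching instant $t_k+v_k$. Combining~\eqref{eq:bet} and~\eqref{eq:alp2} gives the identity $\alpha=\dfrac{\rho\beta}{\rho-\beta}$ (equivalently, $\alpha$ is characterized by $\alpha u_k=\beta x_k$ after inserting $u_k=\frac{\rho-\beta}{\rho}x_k$), and in particular $\rho-\beta>0$ makes $\alpha>0$. Hence at $t=t_k+v_k$ we have $x=u_k$ and $y=\beta x_k=\alpha u_k$, so $y=\alpha x$ there.

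It remains to check the strict inequalities on the two sub-intervals. On $[t_k,t_k+v_k)$ the quantity $\alpha x(t)-y(t)=(\alpha+\rho)\,x(t)-\rho x_k$ is strictly decreasing in $t$ (since $x$ decreases and $\alpha+\rho>0$) and vanishes at $t=t_k+v_k$, hence is strictly positive on $[t_k,t_k+v_k)$; thus $y(t)<\alpha x(t)$ precisely on the states where~\eqref{eq:swc} routes to the idle queue. On $[t_k+v_k,t_{k+1})$ the value $y(t)=\beta x_k=\alpha u_k$ is constant while $x(t)<u_k$ for $t>t_k+v_k$, so $y(t)\ge\alpha x(t)$ there (with equality only at $t_k+v_k$), matching~\eqref{eq:swc}'s prescription to route to the busy queue. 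At $t_{k+1}$ the state is $(0,\beta x_k)$, the busy and idle queues exchange roles, and the same argument applies to the next cycle; for $t\ge T$ the state is $(0,0)$, for which~\eqref{eq:swc} also routes to the busy queue, in agreement with the optimal policy that keeps the system empty. Since~\eqref{eq:swc} thus selects the optimal action at every state visited -- and the boundary $y=\alpha x$ is touched only instantaneously, so the feedback rule generates a well-defined trajectory without chattering -- it reproduces the optimal trajectory and is therefore optimal. The only work involved is bookkeeping the busy/idle relabeling across switching times and the one-line verification that~\eqref{eq:bet} and~\eqref{eq:alp2} yield $\alpha=\rho\beta/(\rho-\beta)$; there is no analytic obstacle once the explicit LQR solution is in hand.
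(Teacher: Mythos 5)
Your proof is correct and takes essentially the same approach as the paper: verify that the trajectory generated by the optimal open-loop policy of Theorems~\ref{th:pre}--\ref{th:lqr} hits the line $y=\alpha x$ exactly at each switch instant $t_k+v_k$ (via the identity $\alpha u_k=\beta x_k$, i.e.\ $\alpha=\rho\beta/(\rho-\beta)$), so that \eqref{eq:swc} selects the optimal action throughout each cycle. Your version is in fact more careful: you check the strict inequalities on both sub-intervals and correctly conclude that \eqref{eq:swc} holds on $[t_k,t_k+v_k)$ and fails on $[t_k+v_k,t_{k+1})$, whereas the paper's proof states the two intervals the other way around, which is evidently a typo.
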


\begin{proof}
Notice that at time $v_k$, the fluid level of the busy queue reduces to
$$
q_1=\frac{-(1-\rho)+\sqrt{(1-\rho)(1+3\rho)}}{1+\rho+\sqrt{(1-\rho)(1+3\rho)}}x_k
$$
and that of the idle queue increases from $0$ to
$$
q_2=\rho\left(x_k-\frac{-(1-\rho)+\sqrt{(1-\rho)(1+3\rho)}}{1+\rho+\sqrt{(1-\rho)(1+3\rho)}}x_k\right)=
\frac{2\rho}{1+\rho+\sqrt{(1-\rho)(1+3\rho)}}x_k.
$$
Thus \eqref{eq:swc} is satisfied for $t \in [t_k + v_k, t_{k+1})$, and it is not satisfied for $t \in [t_k, t_k + v_k)$. At $t=t_k + v_k$ it is satisfied at equality. This proves the theorem.
\end{proof}

Note that the switching curve in \eqref{eq:swc} matches the conjecture in \eqref{eq:con}!

\begin{remark}
{\rm
In the derivations of the previous three theorems, we have assumed that $y(0) = 0$, but this is not a restriction. To see this consider two cases:
\begin{enumerate}
\item[i.]
$0 < y(0) = y_0 \le \alpha x_0$: Consider a system starting at time $\tau = - y(0)/ \lambda$, in state $x(\tau) = x_0 + \mu \tau$ and $y(\tau) = 0$. Then following the optimal policy of Theorem~\ref{th:optpol} from time $t \ge \tau$ will bring the system to state $x(0) = x_0$ and $y(0) = y_0$. Hence the same optimal policy will continue to hold for $t \ge 0$ from the principle of optimality.
\item[ii.]
$0 < \alpha x_0 < y(0) = y_0$: In this case define
\[ \tau_1 = \frac{y(0)-x_0}{\alpha(\mu-\lambda)}, \quad \tau_2 = \frac{y(0)}{\lambda}.\]
Now consider a system starting at time $\tau = - (\tau_1 + \tau_2)$, in state $x(\tau) = x_0 +\alpha(\mu - \lambda)\tau_1 + \mu \tau_2$ and $y(\tau) = 0$. Then following the optimal policy of Theorem~\ref{th:optpol} from time $t \ge \tau$ will bring the system to state $x(0) = x_0$ and $y(0) = y_0$. Hence the same optimal policy will continue to hold for $t \ge 0$ from the principle of optimality.
\end{enumerate}
}
\end{remark}

\section{Numerical Example and Conclusions}
\label{sec:num}

In this section  we present a numerical example with the following parameters:
\[ \lambda = 0.3, \;\; \mu = 0.7, \;\; c=6, \;\; d=1.\]
The three switching curves for the three policies are shown in Figure~\ref{fig:optpol}. The bottom curve corresponds to the switching curve $h$ of the individually optimal policy. It is optimal to join the busy queue in all states $(i,j)$ that lie above this curve. We have numerically observed that as $d$ approaches zero, the switching curve moves up and it reaches $h(i) = i$ when $d=0$, that is, the individually optimal policy is to join the shortest queue when $d=0$. On the other hand, as $d$ increases, the switching curve moves down, and reaches $h(i)=0$ when $d \ge c$: that is, the optimal policy is to always join the busy queue.

\begin{figure}[ht]
\centering
\includegraphics[width=0.9\linewidth]{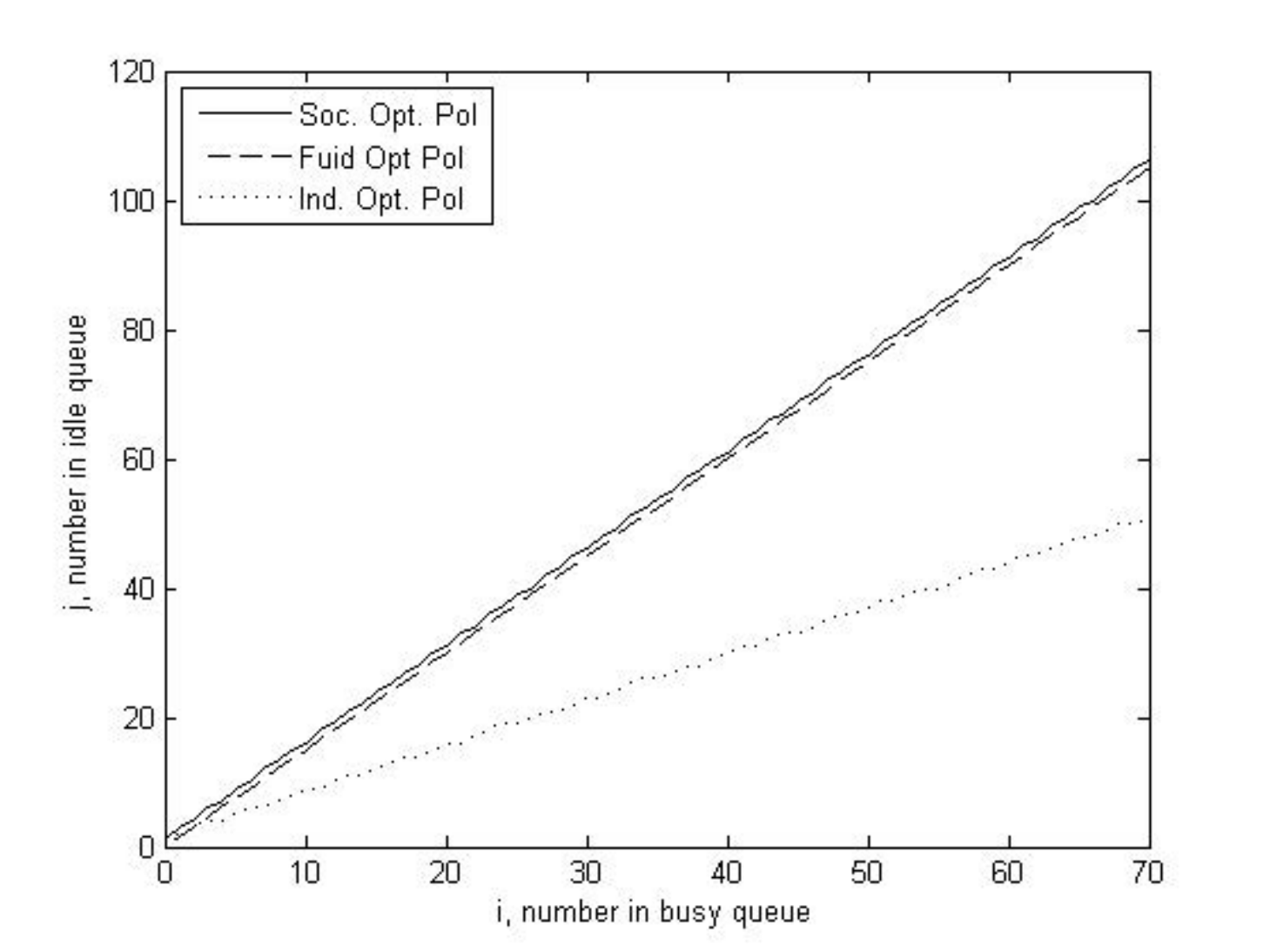}
\caption{The switching curves for the three policies.}\label{fig:optpol}
\end{figure}

The top curve in Figure~\ref{fig:optpol} corresponds to the switching curve $g$ of the socially optimal policy. As we discussed before, this is independent of $c$ and $d$, as long as $c > d$. The middle curve corresponds to the fluid switching curve $\alpha i$, where $\alpha$ is as in \eqref{eq:alp2}. The fluid curve is also independent of $c$ and $d$ as long as $c > d$. It is interesting to see that the two curves are quite close. For both policies, it is optimal to join the busy queue in all states $(i,j)$ that lie above the curve.

Observe that both the socially optimal and the fluid switching curves are above the  curve $j=i$, while the individually optimal curve is below it. This observation for the individually optimal policies was proved in Theorem~\ref{theorem:BusyForSure}. It follows for the fluid policy, because we know that $\alpha > 1$. We have not been able to prove it for the socially optimal policy.

Figure~\ref{fig:optpol} also shows that more customers join the busy queue under the individually optimal policy than under the socially optimal policy. This is consistent with the general observation in other queueing systems, and it is a result of externalities: in individually optimal policies, customers are selfish and ignore the cost their decision imposes on other customers, and hence tend to over-utilize the resources.

It would be interesting to formally prove these observations, but we leave that as future work. In the current paper we considered a simple exhaustive polling system with two queues, identical exponential service times, and no switch-over times and switch-over costs. Clearly, several extensions are possible: to more than two queues, non-identical exponential service times, general service times, service policies other than exhaustive service, non-zero switch-over times or costs, and so on. Each of these extensions makes the analysis harder, since the expressions for the expected queue lengths become more involved.

\end{document}